\newcommand{\myhref}[1]{%
	\ifboolexpr{%
		test {\ifhyperref}
		and
		not test {\iftoggle{bbx:eprint}}
		and
		not test {\iftoggle{bbx:url}}
	}
	{\href{\doiorurl}{#1}}
	{#1}%
}
\let\subsectionSymbol\S
\crefname{subsection}{\subsectionSymbol\!}{subsections}
\newtheorem{theorem}{Theorem}
\newtheorem{proposition}[theorem]{Proposition}
\newtheorem{lemma}[theorem]{Lemma}
\theoremstyle{definition}
\newtheorem{definition}[theorem]{Definition}
\newtheorem{example}[theorem]{Example}
\newtheorem{remark}[theorem]{Remark}
\newcommand{\M}{\cM}
\newcommand{\UM}{{\forget(\M)}}
\newcommand{\Z}{\mathbb{Z}}
\renewcommand{\S}{\mathbb{S}}
\newcommand{\Fp}{{\mathbb{F}_p}}
\newcommand{\Ch}{\mathsf{Ch}}
\DeclareMathOperator{\forget}{U}
\renewcommand{\th}{\mathrm{th}}
\DeclareMathOperator*{\colim}{colim}
\newcommand{\End}{\mathrm{End}}
\newcommand{\Hom}{\mathrm{Hom}}
\newcommand{\xra}[1]{\xrightarrow{#1}}
\newcommand{\cM}{\mathcal{M}}
\title{Cochain level May--Steenrod operations}
\author[R.~Kaufmann]{Ralph~M.~Kaufmann}
\address{R.K., Department of Mathematics, Department of Physics and Astronomy, Purdue University}
\email{\href{mailto:rkaufman@purdue.edu}{rkaufman@purdue.edu}}
\author{Anibal~M.~Medina-Mardones}
\address{A.M-M., Laboratory for Topology and Neuroscience at EPFL \and Max Planck Institute for Mathematics}
\email{\href{mailto:ammedmar@mpim-bonn.mpg.de}{ammedmar@mpim-bonn.mpg.de}}
\thanks{A.M-M. acknowledges financial support from Innosuisse grant \mbox{32875.1 IP-ICT-1}}
\subjclass[2020]{55S05, 55U05, 55S10, 55S12; 55-04, 55U15, 55S15}
\keywords{Steenrod operations, Dyer--Lashof operations, operads, simplicial cochains, cubical cochains}
\tikzset{myptr/.style={decoration={markings,mark=at position 1 with %
			{\arrow[scale=2,>=stealth]{>}}},postaction={decorate}}}
\newsavebox\preproduct
\newcommand{\product}{% <- this 'right of' is inherited; how to avoid?
	\usebox\preproduct}
\newsavebox\precoproduct
\newcommand{\coproduct}{% <- this 'right of' is inherited; how to avoid?
	\usebox\precoproduct}
\newsavebox\preboundary
\newcommand{\boundary}{% <- this 'right of' is inherited; how to avoid?
	\usebox\preboundary}
\newsavebox\precoboundary
\newsavebox\precounit
\newcommand{\counit}{% <- this 'right of' is inherited; how to avoid?
	\usebox\precounit}
\newsavebox\preidentity
\newsavebox\preunit
\newsavebox\preassociativity
\newsavebox\precoassociativity
\newsavebox\preinvolution
\newsavebox\preleftcounitality
\newcommand{\leftcounitality}{% <- this 'right of' is inherited; how to avoid?
	\usebox\preleftcounitality}
\newsavebox\prerightcounitality
\newcommand{\rightcounitality}{% <- this 'right of' is inherited; how to avoid?
	\usebox\prerightcounitality}
\newsavebox\preleftunitality
\newsavebox\prerightunitality
\newsavebox\preproductcounit
\newcommand{\productcounit}{% <- this 'right of' is inherited; how to avoid?
	\usebox\preproductcounit}
\newsavebox\preunitcoproduct
\newsavebox\preleibniz
\newsavebox\prebialgebra
\newsavebox\precommutativity
\begin{document}
	\begin{abstract}
	Steenrod defined in 1947 the Steenrod squares on the mod 2 cohomology of spaces using explicit cochain formulae for the cup-$i$ products; a family of coherent homotopies derived from the broken symmetry of Alexander--Whitney's chain approximation to the diagonal. He later defined his homonymous operations for all primes using the homology of symmetric groups. This approach enhanced the conceptual understanding of the operations and allowed for many advances, but lacked the concreteness of their definition at the even prime. In recent years, thanks to the development of new applications of cohomology, the need to have an effectively computable definition of Steenrod operations has become a key issue. Using the operadic viewpoint of May, this article provides such definitions at all primes introducing multioperations that generalize the Steenrod cup-$i$ products on the simplicial and cubical cochains of spaces.
\end{abstract}
	\maketitle
	
\section{Introduction} \label{s:introduction}

The role Steenrod operations play in stable homotopy theory is hard to overstate.
The reason is that, given the representability of the cohomology functor, these operations together with the Bockstein homomorphism can be used to give a complete description of the algebraic structure naturally present on the mop-$p$ cohomology algebra of spaces.
For the even prime, Steenrod squares were introduced in \cite{steenrod1947products} via an explicit choice of coherent homotopical corrections to the broken symmetry of Alexander--Whitney's chain approximation to the diagonal, the so-called cup-$i$ products.
Later, for odd primes, their definition was given non-effectively using existence arguments based on the mod $p$ homology of symmetric groups \cite{steenrod1952reduced, steenrod1962cohomology, steenrod1962cohomology}.
This viewpoint enhanced the conceptual understanding of the operations and allowed for many advances \cite{adem1952iteration, milnor1958dual, adams1995stable}, but lacked the concreteness of their definition at the even prime.
The purpose of this paper is to fill this gap in the literature, introducing effective descriptions of multioperations at the cochain level -- generalizations of the Steenrod's cup-$i$ products -- that define Steenrod operations at all primes.

In recent years, thanks to the development of new applications of cohomology in areas such as applied topology and topological quantum field theories, the need to have an effectively computable definition of Steenrod operations has gained considerable importance.

In applied topology, the use of persistence homology \cite{edelsbrunner2002topological, carlsson2005barcode} has created many interdisciplinary research directions \cite{chan2013viral, hess2017cliques}, and the availability of formulae for the cup-$i$ products allowed for the development of a theory of persistence Steenrod modules accessing finer features of the data \cite{medina2018persistence}.

In physics, cup-$i$ products are used in the construction of actions for lattice theories whose fields are cochains \cite{gaiotto2016spin, bhardwaj2017fermionic, kapustin2017fermionic}.
This connection with physics motivated the development of effective versions of spin bordism \cite{brumfiel2016pontrjagin, brumfiel2018pontrjagin} prominently featuring higher derived structures at the cochain level \cite{medina2020cartan, medina2021adem}.

Following \cite{may1970general}, we take a more general approach to Steenrod operations that also includes Araki--Kudo--Dyer--Lashof operations on the mod $p$ homology of infinite loop spaces \cite{araki56squaring, dyer62lashof}.
We use the language of operads \cite{may1972geometry} to describe at the (co)chain level the integral structure required to define (co)homology operations at every prime.
We then describe effective constructions of this structure on three prominent models of the $E_\infty$-operad, identifying elements in them that represent Steenrod operations in the mod $p$ homology of their algebras; these are the Barratt--Eccles \cite{berger2004combinatorial}, surjection \cite{mcclure2003multivariable}, and $U(\mathcal M)$ \cite{medina2020prop1} operads.
Since the cochains of simplicial sets are equipped with effective and compatible algebra structures over each of these operads, we are able to explicitly describe canonical multioperations at the cochain level representing Steenrod operations at every prime, generalizing Steenrod's original cup-$i$ products \cite{steenrod1947products}.
An alternative approach based on the Eilenberg--Zilber contraction can be found in \cite{gonzalez2005cocyclic}.

The operad $\UM$ also acts effectively on cubical cochains \cite{medina2021cubical}, so we obtain explicit cochain level multioperations representing the Steenrod operations in this setting, which generalize the cup-$i$ constructions of Kadeishvili \cite{kadeishvili2003cupi} and Kr\v{c}\'{a}l--Pilarczy \cite{pilarczyk2016cubical}.
A context where the cubical viewpoint arises naturally is the study of base loop spaces.
This is through Baues' cubical generalization of Adams' cobar construction \cite{adams1956cobar, baues1998hopf}.
By using Baues' work, an application of the constructions presented in this paper is the explicit description, at the chain level, of Steenrod operations on the cobar construction of the coalgebra of chains on a reduced simplicial set \cite{medina2021cobar}.

Emphasizing their constructive nature, an implementation of all the constructions in this article can be found in the specialized computer algebra system \texttt{ComCH} \cite{medina2021computer}.

\subsection*{Outline}

We first introduce, in Section~\ref{s:preliminaries}, the conventions we will follow regarding chain complexes, simplicial sets and cubical sets.
Then, in Section~\ref{s:goup homology}, we review the key notions from group homology which we will use mainly for cyclic and symmetric groups.
Section~\ref{s:operads} is devoted to the language of operads and related structures, which we use in Section~\ref{s:steenrod} to introduce the notion of May--Steenrod structure, an integral structure at the (co)chain level inducing Steenrod operations for every prime.
Section~\ref{s:effective}, the bulk of this work, presents effective constructions of May--Steenrod structures on the Barratt--Eccles, surjection, and $U(\mathcal M)$ operads.
It also describes a natural $U(\mathcal M)$-algebra structure on the cochains of simplicial and cubical sets inducing natural May--Steenrod structures on them.
We end, in Section~\ref{s:outlook}, with an overview of some connections of this work to certain geometric and combinatorial structures and provide an outline of future research directions.
	\section*{Acknowledgement}

The authors thank Clemens Berger, Calista Bernard, Greg Brumfiel, Federico Cantero-Mor\'an, Greg Friedman, Kathryn Hess, Jens Kjaer, John Morgan, Andy Putman, Paolo Salvatore, Dev Sinha, and Dennis Sullivan for insightful discussions, and the anonymous referee for many keen observations and helpful suggestions.
	
\section{Preliminaries} \label{s:preliminaries}

\subsection{Chain complexes} \label{s s:chain cpx}

Let $R$ be a ring.
We denote by $(\mathbf{Ch}_R, \otimes, R)$ the symmetric monoidal category of homologically graded chain complexes of $R$-modules.
The set of $R$-linear maps between chain complexes as well as the tensor product of chain complexes are regarded as chain complexes in the usual way:
\begin{equation*}
\Hom(A, A^\prime)_n = \big\{f \ |\ a \in A_m \Rightarrow f(a) \in A^\prime_{m+n} \big\}, \qquad
\partial f = \partial \circ f - (-1)^{|f|}f \circ \partial,
\end{equation*}
\begin{equation*}
(A \otimes A^\prime)_n = \bigoplus_{p + q = n} A_p \otimes A^\prime_q, \qquad \partial (a \otimes a^\prime) = \partial a \otimes a^\prime + (-1)^{|a|} a \otimes \partial a^\prime.
\end{equation*}
We embed the category of $R$-modules as the full subcategory of $\mathbf{Ch}_R$ with objects concentrated in degree $0$.
The endofunctor $\Hom(-, R)$ is referred to as \textit{linear duality}.
We notice that if a chain complex is concentrated in non-negative degrees then its linear dual concentrates on non-positive ones.

The rings we will mostly be interested in are the group rings $\Z[\mathrm G]$ and $\mathbb{F}_p[\mathrm G]$ of finite groups, where $p$ is prime and $\mathbb{F}_p$ is the field with $p$ elements.

\subsection{Simplicial sets}

The \textit{simplex category} $\triangle$ is defined to have an object $[n] = \{0, \dots, n\}$ for every $n \in \mathbb{N}$ and a morphism $[m] \to [n]$ for each order-preserving function from $[m]$ to $[n]$.
The morphisms $\delta_i \colon [n-1] \to [n]$ and $\sigma_i \colon [n+1] \to [n]$ defined for $0 \leq i \leq n$ by
\begin{equation*}
\delta_i(k) =
\begin{cases} k & k < i, \\ k+1 & i \leq k, \end{cases}
\quad \text{ and } \quad
\sigma_i(k) =
\begin{cases} k & k \leq i, \\ k-1 & i < k, \end{cases}
\end{equation*}
generate all morphisms in the simplex category.

A \textit{simplicial set} $X$ is a contravariant functor from the simplex category to the category of sets, and a simplicial map is a natural transformation between two simplicial sets.
As is customary, we use the notation
\begin{equation*}
X\big( [n] \big) = X_n, \qquad X(\delta_i) = d_i, \qquad X(\sigma_i) = s_i,
\end{equation*}
and refer to elements in the image of any $s_i$ as \textit{degenerate}.

For each $n \in \mathbb{N}$, the simplicial set $\triangle^n$ is defined by
\begin{equation*}
\triangle^n_k = \Hom_{\triangle} \big( [k], [n] \big), \qquad
d_i(x) = x \circ \delta_i, \qquad
s_i(x) = x \circ \sigma_i,
\end{equation*}
and any simplicial set can be expressed as a colimit of these
\begin{equation*}
X \cong \colim_{\triangle^n \to X} \triangle^n.
\end{equation*}
We represent the non-degenerate elements of $\triangle^n_k$ as increasing sequences $[v_0, \dots, v_k]$ of non-negative integers each less than or equal to $n$.

The functor $N_\bullet$ of \textit{normalized chains} (with $R$-coefficients) is defined as follows:
\begin{equation*}
N_\bullet(X; R)_n = \frac{R \{ X_n \}}{R \{ s(X_{n-1}) \}}
\end{equation*}
where $s(X_{n-1}) = \bigcup_{i=0}^{n-1} s_i(X_{n-1})$, and $\partial_n \colon N_\bullet(X)_n \to N_\bullet(X)_{n-1}$ is given by
\begin{equation*}
\partial_n = \sum_{i=0}^{n} (-1)^id_{i}.
\end{equation*}
The functor of \textit{normalized cochains} $N^\bullet$ is defined by composing $N_\bullet$ with the linear duality functor $\Hom(-, R)$.

It is convenient to emphasize that
\begin{equation*}
N_\bullet(X; R) = \colim_{\triangle^n \to X} N_\bullet(\triangle^n; R).
\end{equation*}

\subsection{Cubical sets}

The \textit{cube category} $\square$ is the free strict monoidal category with a \textit{bipointed object}
\begin{equation*}
\begin{tikzcd}
1 \arrow[r, bend left, "\delta^0"] \arrow[r, bend right, "\delta^1"'] & 2 \arrow[r, "\sigma"] & 1
\end{tikzcd}
\end{equation*}
such that $\sigma \circ \delta^0 = \sigma \circ \delta^1 = \mathrm{id}$.
Explicitly, it contains an object $2^n$ for each non-negative integer $n$ and its morphisms are generated by the \textit{coface} and \textit{codegeneracy maps} defined by
\begin{align*}
\delta_i^\varepsilon & = \mathrm{id}_{2^{i-1}} \times \delta^\varepsilon \times \mathrm{id}_{2^{n-1-i}} \colon 2^{n-1} \to 2^n, \\
\sigma_i & = \mathrm{id}_{2^{i-1}} \times \, \sigma \times \mathrm{id}_{2^{n-i}} \colon 2^{n} \to 2^{n-1}.
\end{align*}

A \textit{cubical set} $X$ is a contravariant functor from the cube category to the category of sets, and a cubical map is a natural transformation between two cubical sets.
As is customary, we use the notation
\begin{equation*}
X\big( 2^n \big) = X_n \qquad X(\delta^\varepsilon_i) = d^\varepsilon_i \qquad X(\sigma_i) = s_i,
\end{equation*}
and refer to elements in the image of any $s_i$ as \textit{degenerate}.

For each $n \in \mathbb{N}$, the cubical set $\square^n$ is defined by
\begin{equation*}
\square^n_k = \Hom_{\square} \big( 2^k, 2^n \big), \qquad
d^\varepsilon_i(x) = x \circ \delta^\varepsilon_i, \qquad
s_i(x) = x \circ \sigma_i.
\end{equation*}
We represent the non-degenerate elements of $\square^n$ as sequences $x_1 \cdots\, x_n$ with each $x_i \in \big\{[0], [1], [0,1]\big\}$.
For example, $[0][01][1]$ represents $\delta^1 \times \mathrm{id} \times \delta^0$.
Any cubical set can be expressed as a colimit of these
\begin{equation*}
X \cong \colim_{\square^n \to X} \square^n.
\end{equation*}

The functor $N_\bullet$ of \textit{normalized chains} (with $R$-coefficients) is defined as follows: The chain complex $N_\bullet(\square^1)$ is simply the cellular chain complex of the interval,
isomorphic to
\begin{equation*}
\begin{tikzcd} [column sep = small, row sep = 0.1pt]
R\{[0], [1]\} & \arrow[l] R\{[0,1]\} \\
{[1] - [0]} & \arrow[l, |->] \left[0,1\right].
\end{tikzcd}
\end{equation*}
Set
\begin{equation*}
N_\bullet(\square^n; R) = N_\bullet(\square^1; R)^{\otimes n}
\end{equation*}
and define
\begin{equation*}
N_\bullet(X; R) = \colim_{\square^n \to X} N_\bullet(\square^n; R).
\end{equation*}

The functor of \textit{normalized cochains} $N^\bullet$ is defined by composing $N_\bullet$ with the linear duality functor $\Hom(-, R)$.
	
\section{Group homology} \label{s:goup homology}

Fixing notation, let $\mathrm{S}_r$ be the symmetric group of $r$ elements and let $\mathrm{C}_r$ be the cyclic group of order $r$ thought of as the subgroup of $\mathrm{S}_r$ generated by an element~$\rho$.
We denote this inclusion by $\iota \colon \mathrm C_r \to \mathrm S_r$.

A \textit{resolution} in $\mathbf{Ch}_R$ is a quasi-isomorphism $P \to M$ with each $P_r$ being a free $R$-module.
We will use the fact, explained in Section 6.5 of \cite{jacobson1989algebra}, that such resolutions exist for any chain complex $M$ concentrated in non-negative degrees.

Let $\mathrm G$ be a group and $M$ an $R[\mathrm G]$-module.
The \textit{homology of $\mathrm G$ with coefficients in $M$}, denoted by $H(\mathrm G; M)$, is defined as the homology of the chain complex $P \otimes_{R[\mathrm G]} M$ where $P \to R$ is any resolution in $\Ch_{R[\mathrm G]}$.
We will be particularly interested in the case when $M = \mathbb F_p(q)$ is the trivial or sign $\mathbb F_p[\mathrm{S}_r]$-module depending on if the parity of~$q$ is even or odd respectively.

We now review the group homology of finite cyclic groups.
For any ring $R$ the elements
\begin{equation} \label{eq: T and R definition}
\begin{split}
T &= \rho - 1, \\
N &= 1 + \rho + \cdots + \rho^{n-1},
\end{split}
\end{equation}
in $R[\mathrm{C}_r]$ generate the ideal of annihilators of each other.
Therefore, the chain complex of $R[\mathrm{C}_r]$-modules
\begin{equation} \label{eq: minimal resolution}
\begin{tikzcd} [column sep = .5cm]
\mathcal W(r) = R[\mathrm{C}_r]\{e_0\} & \arrow[l, "\,T"'] R[\mathrm{C}_r]\{e_1\} & \arrow[l, "\,N"'] R[\mathrm{C}_r]\{e_2\} & \arrow[l, "\,T"'] \cdots
\end{tikzcd}
\end{equation}
concentrated in non-negative degrees, with $\mathcal W(r)_d$ the free $R[\mathrm{C}_r]$-module $\mathcal W(r)_d$ generated by an element $e_d$, and differential induced from
\begin{equation*}
\partial(e_d) = \begin{cases}
Te_{d-1} & d \text{ odd,} \\
Ne_{d-1} & d \text{ even,}
\end{cases}
\end{equation*}
defines a resolution $\mathcal W(r) \to R$ in $\mathbf{Ch}_{R[\mathrm{C}_r]}$.

It follows from a straightforward computation that for any prime $p$ and integer $q$
\begin{equation*}
H_i(\mathrm{C}_p; \mathbb{F}_p(q)) = \mathbb{F}_p.
\end{equation*}

The homology of $\mathrm{S}_r$ is harder to compute.
With untwisted coefficients, the method of computation followed by several authors was to prove the injectivity of this homology into that of the infinite symmetric group and take advantage of a natural Hopf algebra structure on it.
A powerful result stemming from the deep connection of this question with infinite loop space theory is the existence of a homology isomorphism of spaces
\begin{equation*}
\Z \times B\mathrm{S}_\infty \to Q(S^0) = \Omega^\infty \Sigma^\infty (S^0)
\end{equation*}
credited to Dyer--Lashof \cite{dyer62lashof}, Barratt--Priddy and Quillen \cite{barratt1972priddyquillen}.

In this work, we are interested in the mod $p$ homology of $\mathrm{S}_p$ (with $p$ a prime) which, as explained in \cite[Corollary~VI.1.4]{adem2004milgram}, is detected by a group inclusion $\iota \colon \mathrm{C}_p \to \mathrm{S}_p$, that is, the map induced in mod $p$ group homology by $\iota$ is a surjection.
We now describe the kernel of this surjection.

\begin{lemma} \label{lem: Thom's theorem}
	Let $p$ be an odd prime and $q$ an integer.
	Consider
	\begin{equation*}
	(\iota_\ast)_d \colon H_d(\mathrm{C}_p; \mathbb{F}_p(q)) \to H_d(\mathrm{S}_p; \mathbb{F}_p(q))
	\end{equation*}
	then
	\begin{enumerate}
		\item If $q$ is even, $(\iota_\ast)_d = 0$ unless there is an integer $t$ so that $d = 2t(p-1)$ or $d = 2t(p-1) - 1$.
		\item If $q$ is odd, $(\iota_\ast)_d = 0$ unless there is an integer $t$ so that $d = (2t+1)(p-1)$ or $d = (2t+1)(p-1)-1$.
	\end{enumerate}
\end{lemma}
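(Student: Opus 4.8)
\section{Proof proposal}

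The plan is to combine two ingredients: the vanishing of inner automorphisms of $\mathrm{S}_p$ on $H_\ast(\mathrm{S}_p;\mathbb{F}_p(q))$, which lets us control $(\iota_\ast)_d$ through the norm element of $\mathbb{F}_p[W]$, and an explicit computation of the Weyl-group action on $H_\ast(\mathrm{C}_p;\mathbb{F}_p(q))$ via the resolution $\mathcal{W}(p)$ of Section~\ref{s:goup homology}. Write $\mathrm{N}=\mathrm{N}_{\mathrm{S}_p}(\mathrm{C}_p)$ for the normalizer and $W=\mathrm{N}/\mathrm{C}_p$ for the Weyl group; classically $\mathrm{N}\cong\mathrm{C}_p\rtimes W$ with $W\cong\mathbb{F}_p^\times$ cyclic of order $p-1$, a generator $\nu$ of $W$ acting on $\mathrm{C}_p=\langle\rho\rangle$ by $\rho\mapsto\rho^a$ for some generator $a$ of $\mathbb{F}_p^\times$, realized in $\mathrm{S}_p$ by the permutation given by multiplication by $a$ on $\mathbb{Z}/p$. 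That permutation fixes $0$ and cyclically permutes the remaining $p-1$ elements; since $p-1$ is even it is odd, so $\nu$ acts on the coefficients $\mathbb{F}_p(q)$ by the scalar $(-1)^q$.

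First I would dispatch the formal half. Conjugation by any element of $\mathrm{S}_p$ induces the identity on $H_\ast(\mathrm{S}_p;\mathbb{F}_p(q))$, so $\iota_\ast\circ\nu_\ast=\iota_\ast$, where $\nu_\ast$ denotes the conjugation action of $W$ on $H_\ast(\mathrm{C}_p;\mathbb{F}_p(q))$; applying this to the norm element $N_W=\sum_{w\in W}w\in\mathbb{F}_p[W]$ gives $\iota_\ast\circ(N_W)_\ast=(p-1)\,\iota_\ast$. As $p-1$ is invertible modulo $p$, we conclude that $(\iota_\ast)_d=0$ whenever $N_W$ annihilates $H_d(\mathrm{C}_p;\mathbb{F}_p(q))$. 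Now $H_d(\mathrm{C}_p;\mathbb{F}_p(q))\cong\mathbb{F}_p$ is one-dimensional — as recalled in Section~\ref{s:goup homology}, and because $\mathbb{F}_p(q)$ restricts to the trivial $\mathrm{C}_p$-module, a $p$-cycle being even — so $W$ acts through a single scalar $\lambda_d\in\mathbb{F}_p^\times$ and $N_W$ acts by $\sum_{j=0}^{p-2}\lambda_d^{\,j}$, which is $0$ unless $\lambda_d=1$ (geometric series, using $\lambda_d^{\,p-1}=1$). It therefore suffices to compute $\lambda_d$ and verify that $\lambda_d=1$ only in the listed degrees.

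For the computational half I would upgrade $\mathcal{W}(p)$ to an $\mathrm{N}$-equivariant resolution of $\mathbb{F}_p$. With $u_a=1+\rho+\cdots+\rho^{a-1}$, a unit of $\mathbb{F}_p[\mathrm{C}_p]$ with augmentation $a$, one checks inductively, using $\partial e_d\in\{Te_{d-1},\,Ne_{d-1}\}$ together with $\rho^a-1=(\rho-1)u_a$ and $Nu_a=aN$, that the $\phi_a$-semilinear maps determined by $\nu(e_{2i})=a^i e_{2i}$ and $\nu(e_{2i+1})=a^i u_a e_{2i+1}$ (where $\phi_a$ is the automorphism $\rho\mapsto\rho^a$) commute with the differential and have order $p-1$. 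The conjugation action on $H_\ast(\mathrm{C}_p;\mathbb{F}_p(q))$ can be computed from any such equivariant resolution; tensoring over $\mathbb{F}_p[\mathrm{C}_p]$ with $\mathbb{F}_p(q)$ annihilates all differentials, and tracking scalars — moving $u_a$ across the tensor symbol, where on the trivial module it acts by its augmentation $a$, and inserting the coefficient scalar $(-1)^q$ — yields $\lambda_{2i}=(-1)^q a^i$ and $\lambda_{2i+1}=(-1)^q a^{i+1}$, i.e. $\lambda_d=(-1)^q a^{\lceil d/2\rceil}$.

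It remains to solve $\lambda_d=1$. When $q$ is even this reads $a^{\lceil d/2\rceil}=1$, i.e. $(p-1)\mid\lceil d/2\rceil$; for $d=2i$ this gives $d=2t(p-1)$ and for $d=2i+1$ it gives $d=2t(p-1)-1$. When $q$ is odd it reads $a^{\lceil d/2\rceil}=-1=a^{(p-1)/2}$, i.e. $\lceil d/2\rceil\equiv(p-1)/2\pmod{p-1}$; for $d=2i$ this gives $d=(2t+1)(p-1)$ and for $d=2i+1$ it gives $d=(2t+1)(p-1)-1$. These are exactly the exceptional degrees in the statement, so $(\iota_\ast)_d=0$ outside them. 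I expect the computational half — rigging the $\mathrm{N}$-equivariant structure on $\mathcal{W}(p)$ and bookkeeping the units $u_a$ and the sign $(-1)^q$ — to be the delicate step; the reduction via inner automorphisms and the norm element is formal. (A Mackey double-coset argument, using that distinct conjugates of $\mathrm{C}_p$ meet trivially, would moreover identify $\mathrm{res}^{\mathrm{S}_p}_{\mathrm{C}_p}\circ\iota_\ast$ with $(N_W)_\ast$ in positive degrees, hence show $(\iota_\ast)_d\neq 0$ in exactly the complementary degrees, though only the stated vanishing is needed here.)
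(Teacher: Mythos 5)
Your proposal is correct, but it is worth saying up front that the paper does not actually prove this lemma: its ``proof'' is a citation to Theorem~4.1 of Steenrod's \emph{Cyclic reduced powers of cohomology classes}, with a different proof credited to Thom. What you have written is essentially a self-contained reconstruction of that classical argument: the normalizer of $\mathrm{C}_p$ in $\mathrm{S}_p$ is $\mathrm{C}_p\rtimes\mathbb{F}_p^\times$, inner automorphisms act trivially on $H_\ast(\mathrm{S}_p;\mathbb{F}_p(q))$, so $\iota_\ast$ kills everything annihilated by the Weyl norm, and on a one-dimensional $H_d(\mathrm{C}_p;\mathbb{F}_p(q))$ the norm vanishes exactly when the Weyl generator acts by a scalar $\lambda_d\neq 1$; the scalar computation via the semilinear lift of $\rho\mapsto\rho^a$ on $\mathcal W(p)$ checks out ($(\rho^a-1)=(\rho-1)u_a$ and $Nu_a=aN$ give a chain map, and tensoring down yields $\lambda_d=(-1)^q a^{\lceil d/2\rceil}$, whose solutions of $\lambda_d=1$ are precisely the listed degrees). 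Two small remarks: the assertion that your semilinear lift ``has order $p-1$'' is neither needed nor obviously true at the chain level (on homology the powers of $(c_\nu)_\ast$ are what matter, and those are handled by functoriality), and there is a harmless $a$ versus $a^{-1}$ ambiguity in the homology versus cohomology action that does not affect the condition $\lambda_d=1$. What your approach buys over the paper's is an actual effective proof in the spirit of the article, pinned to the explicit resolution $\mathcal W(p)$ already introduced in Section~\ref{s:goup homology}; what the citation buys is brevity and the sharper statement (nonvanishing in exactly the complementary degrees), which your parenthetical Mackey remark would also recover but which the lemma does not require.
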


\begin{proof}
	This is proven as Theorem 4.1 in \cite{steenrod1953cyclic} where Thom is also credited with a different proof.
\end{proof}

In Section 5 we will see how the mod $p$ homology of symmetric groups defines operations on the mod $p$ homology of algebras that are commutative up to coherent homotopies.
Preparing for that, we first develop the language of $\Gamma$-modules, operads, and props.
	
\section{\texorpdfstring{$\Gamma$}{Gamma}-modules, operads and props} \label{s:operads}

In this section we set up a framework in which the structure responsible for Steenrod operations becomes most transparent.
Given our applications, we consider $\mathbf{Ch}_R$ as the base category, remarking that all definitions in this section apply to general closed symmetric monoidal categories.

\subsection{$\Gamma$-modules}
Recall that a group $\mathrm G$ can be thought of as a category with a single object and only invertible morphisms, and that a chain complex of left (resp. right) $R[\mathrm G]$-modules is the same as a covariant (resp. contravariant) functor from $\mathrm G$ to $\mathbf{Ch}_R$.
Taking inverses allows for the switch between left and right conventions.

A \textit{groupoid} is a small category where all morphisms are invertible.
\begin{definition}
	A \textit{$\Gamma$-module} is a covariant functor to $\mathbf{Ch}_R$ from a groupoid $\Gamma$ with objects being the natural numbers and morphisms satisfying $\Gamma(r,s) = \emptyset$ for $r \neq s$.
	We denote the category of $\Gamma$-modules and natural transformations by $\mathbf{Ch}_R^\Gamma$.
\end{definition}

We are mostly interested in two examples of $\Gamma$-modules, those associated to the groupoids $\mathrm{S}$ and $\mathrm{C}$ defined by
\begin{equation*}
\mathrm{S}(r, r) = \mathrm{S}_r, \qquad
\mathrm{C}(r,r) = \mathrm{C}_r,
\end{equation*}
for every $r \in \mathbb{N}$.
The inclusion $\mathrm{C}_r \to \mathrm{S}_r$ induces a forgetful functor
\begin{equation*}
\begin{tikzcd} [column sep = small]
\mathbf{Ch}_R^\mathrm{S} \arrow[r] & \mathbf{Ch}_R^\mathrm{C}.
\end{tikzcd}
\end{equation*}

Given an object $A$ in $\mathbf{Ch}_R$ there are two important $\Gamma$-modules associated to it; an $\mathrm{S}^{op}$-module known as \textit{endomorphism $\mathrm{S}^{op}$-module} $\End_A$, and an $\mathrm{S}$-module known as \textit{coendomorphism $\mathrm{S}$-module} $\End^A$.
These are defined by
\begin{align*}
\End_A(r) &= \Hom(A^{\otimes r},A), \\
\End^A(r) &= \Hom(A,A^{\otimes r}),
\end{align*}
with respective right and left actions defined by permutation of tensor factors.

Another groupoid of importance to us is $\mathrm{S} \times \mathrm{S}^{op}$ with covariant functors from it to $\mathbf{Ch}_R$ referred to as \textit{$\mathrm{S}$-bimodules}.
Notice that the inclusions $\mathrm{S} \to \mathrm{S} \times \mathrm{S}^{op}$ induced by $r \mapsto (r,1)$ and $r \mapsto (1,r)$ define forgetful functors
\begin{equation*}
\begin{tikzcd}[column sep=small, row sep=tiny]
& \mathbf{Ch}_R^{\mathrm{S} \times \mathrm{S}^{op}} \arrow[dl, "U_1"', out=-120, in=0] \arrow[dr, "U_2", out=-60, in=180] & \\
\mathbf{Ch}_R^{\mathrm{S}^{op}} & & \mathbf{Ch}_R^{\mathrm{S}}.
\end{tikzcd}
\end{equation*}
Explicitly, $U_1(\mathcal P)(r) = \mathcal P(r,1)$ and $U_2(\mathcal P)(r) = \mathcal P(1,r)$ for any $\mathcal P$ in $\mathbf{Ch}_R^{\mathrm{S} \times \mathrm{S}^{op}}$.
Notice that for any object $A$ in $\mathbf{Ch}_R$ the canonical \textit{endomorphism bimodule}
\begin{equation*}
\End_A^A(r, s) = \Hom(A^{\otimes r}, A^{\otimes s})
\end{equation*}
forgets via $U_1$ and $U_2$ to $\End_A$ and $\End^A$ respectively.

Using the groupoid automorphism sending every morphisms to its inverse we can identify $\Gamma$- and $\Gamma^{op}$-modules, and prove that the linear duality functor induces a morphism of $\mathrm{S}$-modules
\begin{equation*}
\End^A \to \End_{\Hom(A, R)}
\end{equation*}
for every object $A$ in $\mathbf{Ch}_R$.
We will use this identification freely in what follows.

A \textit{resolution} in $\mathbf{Ch}_R^\Gamma$ is a morphism $\phi$ of $\Gamma$-modules such that $\phi(r)$ is a resolution in the category of chain complexes of $R[\Gamma_r]$-modules for each $r \in \mathbb{N}$, where $\Gamma_r$ denotes $\Gamma(r,r)$.
A $\Gamma$-module $\mathcal R$ is said to be $E_\infty$ if $\mathcal R(0) = R$ and there exists a resolution $\mathcal R \to \underline{R}$ where $\underline{R}$ is the object in $\mathbf{Ch}_R^\Gamma$ defined by $\underline{R}(r) = R$ and $\underline{R}(\gamma) = \mathrm{id}_R$ for every $r \in \mathbb{N}$ and $\gamma \in \Gamma_r$.

We have the following evident generalization to the context of groupoids of the resolutions introduced in \eqref{eq: minimal resolution}.

\begin{definition} \label{def: minimal cyclic resolution}
	The \textit{minimal} $E_\infty$ $\mathrm{C}$-\textit{module} $\mathcal W$ is the functor in $\mathbf{Ch}_R^\mathrm{C}$ assigning to $r$ the chain complex
	\begin{equation*}
	\begin{tikzcd} [column sep = .5cm]
	\mathcal W(r) = R[\mathrm{C}_r]\{e_0\} & \arrow[l, "\,T"'] R[\mathrm{C}_r]\{e_1\} & \arrow[l, "\,N"'] R[\mathrm{C}_r]\{e_2\} & \arrow[l, "\,T"'] \cdots
	\end{tikzcd}
	\end{equation*}
	concentrated in non-negative degrees.
\end{definition}

\subsection{Operads and props}

Operads and props are respectively $\mathrm{S}$-modules and $\mathrm{S}$-bimodules enriched with further compositional structure.
These structures are best understood by abstracting the compositional structure naturally present in the endomorphism $\mathrm{S}$-module $\End_A$ (or $\End^A$), naturally an operad, and the endomorphism $\mathrm{S}$-bimodule $\End_A^A$, naturally a prop.

Succinctly, an operad $\mathcal O$ is an $\mathrm{S}$-module together with a collection of $R$-linear maps
\begin{equation*}
\mathcal O(r) \otimes \mathcal O(s) \to \mathcal O(r+s-1)
\end{equation*}
satisfying suitable associativity, equivariance and unitality conditions.
A prop $\mathcal P$ is an $\mathrm{S}$-bimodule together with two types of compositions; horizontal
\begin{equation*}
\mathcal P(r_1, s_1) \otimes \mathcal P(r_2, s_2) \to \mathcal P(r_1 + r_2, s_1 + s_2)
\end{equation*}
and vertical
\begin{equation*}
\mathcal P(r,s) \otimes \mathcal P(s, t) \to \mathcal P(r, t)
\end{equation*}
satisfying their own versions of associativity, equivariance and unitality.
For a complete presentation of these concepts we refer to \cite[Definitions 11 and 54]{markl2008props}.

We add that for any prop $\mathcal P$, the compositional structure of $\mathcal P$ defines an operad structure on $U_1(\mathcal P)$ and $U_2(\mathcal P)$.
We will use this automorphism without further notice when dealing with $\mathrm{S}^{op}$-modules.

We now introduce the type of operads that we are most interested in which, as we will discuss in the next section, are used to describe commutativity up to coherent homotopies.

\begin{definition} [\cite{may1972geometry}, \cite{boardman1973homotopy}] \label{def: e-infinity operad and prop}
	An operad is said to be an $E_\infty$-operad if its underlying $\mathrm{S}$-module is $E_\infty$, and a prop $\mathcal P$ is said to be an $E_\infty$-prop if either $U_1(\mathcal P)$ or $U_2(\mathcal P)$ is an $E_\infty$-operad.
\end{definition}

\subsection{Algebras, coalgebras and bialgebras}

A morphism of operads or of props is simply a morphism of their underlying $\mathrm{S}$-modules or $\mathrm{S}$-bimodules preserving the respective compositional structures.

Given a chain complex $A$, an operad $\mathcal O$ and a prop $\mathcal P$.
An $\mathcal O$-\textit{algebra} (resp. $\mathcal O$-\textit{coalgebra}) structure on $A$ is an operad morphism $\mathcal O \to \End_A$ (resp. $\mathcal O \to \End^A$), and a $\mathcal P$-\textit{bialgebra} structure on $A$ is a prop morphism $\mathcal P \to \End_A^A$.

We remark that the linear duality functor naturally transforms an $\mathcal O$-coalgebra structure on a chain complex into an $\mathcal O$-algebra structure on its dual.

Algebras over $E_\infty$-operads are the central objects of study in this work.
To develop intuition for them, let us consider a chain complex $A$ with an algebra structure over the constant functor $\underline{R}$, thought of as an operad with all compositions corresponding to the identity map $R \to R$.
The $\underline{R}$-algebra structure on $A$ is generated by a linear map $\mu \colon A \otimes A \to A$ which is (strictly) commutative and associative, and a linear map $\eta \colon R \to A$ that determines a (two-sided) unit for $\mu$.
Since $E_\infty$-operads are resolutions of $\underline{R}$, their algebras can be thought of as usual unital algebras where the commutativity and associativity relations hold up to coherent homotopies.
The two main examples to keep in mind are the cochains of spaces and the chains of infinite loop spaces.

\section{May--Steenrod structures} \label{s:steenrod}

We now introduce an operadic structure giving rise to Steenrod operations based in \cite{may1970general}.
In our presentation we emphasize the integral structure needed to define them at every prime.
For a more geometric treatment we refer the reader to \cite{may1972geometry, may76homology, lawson2020dyerlashof}, and for a different operadic approach at the even prime to \cite{chataur2005adem-cartan}.

Let us assume the ground ring to be $\Z$ unless stated otherwise.
\begin{definition} \label{def: May--Steenrod structure}
	A \textit{May--Steenrod structure} on an operad $\mathcal O$ is a
	morphism of $\mathrm{C}$-modules $\psi \colon \mathcal W \to \mathcal O$ for which there exists a factorization through an $E_\infty$-operad
	\begin{equation*}
	\begin{tikzcd}[column sep = normal, row sep = small]
	& \mathcal R \arrow[dr, "\phi", out=0] & \\
	\mathcal W \arrow[ur, "\iota", in=180] \arrow[rr, "\psi"] & & \mathcal O
	\end{tikzcd}
	\end{equation*}
	such that $\iota$ is a quasi-isomorphism and $\phi$ a morphism of operads.
\end{definition}

\begin{remark} \label{rmk: Deligne conjecture}
	In \cite{GerstenhaberVoronov} an operad morphism $\mathcal{A}ssoc \to \mathcal O$ is referred to as a \textit{multiplication} on $\mathcal O$.
	In this language, a choice of factorization $\phi \circ \iota$ of a May--Steenrod structure on $\mathcal O$ endows it with an $E_\infty$ \textit{multiplication} $\phi$.
\end{remark}

\begin{definition} \label{def: Steenrod products}
	Let $A$ be a chain complex.
	A May--Steenrod structure on $\mathrm{End}_A$ is referred to as one on $A$.
	Given one such structure $\psi \colon \mathcal W \to \End_A$, the \textit{Steenrod cup-}$(r, i)$ \textit{product} of $A$ is defined for every $r, i \geq 0$ as the image in $ \mathrm{End}(A^{\otimes r}, A)$ of $\psi(e_i)$.
\end{definition}

Let $A$ be equipped with a May--Steenrod structure
\begin{equation*}
\begin{tikzcd}[column sep = normal, row sep = small]
& \mathcal R \arrow[dr, "\phi", out=0] & \\
\mathcal W \arrow[ur, "\iota", out=55, in=180] \arrow[rr, "\psi"] & & \End_A.
\end{tikzcd}
\end{equation*}
We can relate this structure on $A$ to those considered by May in \cite{may1970general} as follows.
The morphism $\phi$ provides $A$ with the structure of a homotopy associative algebra defined by the image in $\Hom(A^{\otimes 2}, A)$ of a representative in $\mathcal R(2)$ of a generator of its $0^\th$-homology.
Restricting $\psi$ to arity $r$ defines a map $\theta \colon \mathcal W(r) \otimes A^{\otimes r} \to A$ that makes the pair $(A, \theta)$ into an object in May's category $\mathfrak{C}(\mathrm C_r, \infty, \Z)$ as presented in \cite[Definitions 2.1]{may1970general}.
Explicitly, this means that the pair is such that $\psi(e_0) \in \End_A(r)$ is $\mathrm C_r$-homotopic to the iterated product $A^{\otimes r} \to A$; a claim that follows from the iterated product being a representative of a generator of the $0^\th$-homology of $\mathcal R(r)$, and $\iota$ being a quasi-isomorphism of $\mathrm C$-modules.
Furthermore, for $r$ equal to a prime $p$, tensoring the integers with $\Fp$ makes the pair $(A, \theta)$ into an object in May's category $\mathfrak{C}(\mathrm C_p, \infty, \mathbb{F}_p)$.
For any object $(A, \theta)$ in this category, Definition~2.2 in \cite{may1970general} defines operations on the mod $p$ homology of $A$, a construction we review below.
In particular, if $A$ is given by the cochains of a space these products agree with Steenrod's original definitions, and for $A$ being the chains on an infinite loop space, with those defined by Araki--Kudo and Dyer--Lashof.

For the rest of this section $A$ denotes a chain complex equipped with a May--Steenrod structure.

\begin{definition}
	For any prime $p$, the $\mathbb{F}_p$-linear map
	\begin{equation*}
	D^p_i \colon (A \otimes \mathbb{F}_p) \to (A \otimes \mathbb{F}_p)
	\end{equation*}
	is defined by sending $a$ to the Steenrod cup-$(p, i)$ product of $(a \otimes \cdots \otimes a) \in (A \otimes \mathbb{F}_p)^{\otimes p}$ if $i \geq 0$ and to $0$ otherwise.
\end{definition}

We notice that if $a$ is of degree $q$ then $D^p_i(a)$ is of degree $q + (p-1)q + i$.

\begin{definition}
	For any integer $s$, the \textit{Steenrod operation}
	\begin{equation*}
	P_s \colon H_\bullet(A; \mathbb{F}_2) \to H_{\bullet + s}(A; \mathbb{F}_2)
	\end{equation*}
	is defined by sending the class represented by a cycle $a \in (A \otimes \mathbb{F}_2)$ of degree $q$ to the class represented by $D^2_{s-q}(a)$.
\end{definition}

Notice that the Steenrod operations above, corresponding to Steenrod squares in the context of spaces, are determined by the Steenrod cup-$(2,i)$ products with $\mathbb{F}_2$-coefficients.
These binary operations are known as cup-$i$ products \cite{steenrod1947products, medina2021newformulas} in the space context.
In a similar way, the operations $P$ and $\beta P$ defined below for odd primes are determined by the Steenrod cup-$\big(p, k(p-1)-\varepsilon\big)$ products for $\varepsilon \in \{0,1\}$.
We can explain the appearance of these specific Steenrod cup-$(p,i)$ products as follows.
The increase on the degree of a $q$-cycle after applying $D^p_{k(p-1)-\varepsilon}$ to it is $(p-1)(q+k) - \varepsilon$, which can be rewritten as $2t(p-1) - \varepsilon$ if $q$ is even, and $(2t+1)(p-1) - \varepsilon$ if $q$ is odd.
According to Lemma \ref{lem: Thom's theorem}, these are the only homologically non-trivial cases.

\begin{definition} \label{def: Steenrod operations at odd prime}
	For any integer $s$, the \textit{Steenrod operations}
	\begin{equation*}
	P_s \colon H_\bullet(A; \mathbb{F}_p) \to H_{\bullet + 2s(p-1)}(A; \mathbb{F}_p)
	\end{equation*}
	and
	\begin{equation*}
	\beta P_s \colon H_\bullet(A; \mathbb{F}_p) \to H_{\bullet + 2s(p-1) - 1}(A; \mathbb{F}_p)
	\end{equation*}
	are defined by sending the class represented by a cycle $a \in (A \otimes \mathbb{F}_p)$ of degree $q$ to the classes represented respectively for $\varepsilon \in\{0,1\}$ by
	\begin{equation*}
	(-1)^s \nu(q) D^p_{(2s-q)(p-1)-\varepsilon}(a)
	\end{equation*}
	where $\nu(q) = (-1)^{q(q-1)m/2}(m!)^q$ and $m = (p-1)/2$.
\end{definition}

\begin{remark}
	The use of the coefficient function $\nu(q)$ is motivated by the identity $D_{q(p-1)}^p(a) = \nu(q)a$ in the case of spaces (see \cite[(6.1)]{steenrod1953cyclic}).
	The notation $\beta P_s$ is motivated by the relationship of this operator and the Bockstein of the reduction $\mathbb Z \to \mathbb Z/p\mathbb Z$.
\end{remark}

Steenrod operations defined as above satisfy the so-called \textit{Adem relations}.
Below we present its most common form and refer to Theorem 4.7 in \cite{may1970general} for a complete list.

\begin{lemma}
	Let $A$ be equipped with a May--Steenrod structure.
	Then,
	\begin{enumerate}
		\item If $p = 2$ and $a > 2b$, then
		\begin{equation*}
		P_{a} P_{b} = \sum_i \binom{2i-a}{a-b-i-1} P_{a+b-i}P_i,
		\end{equation*}
		\item If $p > 2$ and $a > pb$, then
		\begin{equation*}
		P_{a} P_{b} = \sum_i (-1)^{a+i} \binom{pi-a}{a-(p-1)b-i-1} P_{a+b-i}P_i.
		\end{equation*}
	\end{enumerate}
\end{lemma}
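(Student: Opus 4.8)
The plan is to reduce the Adem relations, as stated for an arbitrary chain complex $A$ equipped with a May--Steenrod structure, to the classical Adem relations that May establishes in \cite{may1970general}. The key observation is that the entire content of a May--Steenrod structure, as unpacked in the discussion following Definition~\ref{def: Steenrod products}, is that the restriction of $\psi$ to arity $p$ equips $(A, \theta)$ with the structure of an object of May's category $\mathfrak{C}(\mathrm{C}_p, \infty, \mathbb{F}_p)$ after tensoring with $\Fp$. Since the operations $P_s$ and $\beta P_s$ of Definitions~\ref{def: Steenrod operations at odd prime} were defined precisely via May's Definition~2.2 applied to this object (up to the normalization constant $\nu(q)$ and signs), the Adem relations will follow from May's Theorem~4.7 once we check that the conventions match.

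First I would recall that May's construction of Steenrod operations, and his proof of the Adem relations, requires more than just the arity-$p$ data: it requires a compatible family of maps for composite arities, encoding the interaction of the $\mathrm{C}_p$-action with the $E_\infty$-structure witnessing homotopy commutativity. This is exactly what the factorization $\psi = \phi \circ \iota$ through an $E_\infty$-operad $\mathcal R$ provides. So the first step is to spell out how $\phi \colon \mathcal R \to \End_A$ together with $\iota \colon \mathcal W \to \mathcal R$ produces, for each $p$, an action of an $E_\infty$-operad in the sense needed for the iterated-operation arguments of \cite[Section~4]{may1970general}; concretely, one uses $\phi$ on arities $p$, $p^2$, and the relevant composites, restricted along the inclusions of (iterated) cyclic and symmetric groups, to place $(A \otimes \Fp, \theta)$ into the setting where May's admissibility and Adem arguments run verbatim.

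Second, I would verify that the normalization in Definition~\ref{def: Steenrod operations at odd prime} — the factor $(-1)^s \nu(q)$ with $\nu(q) = (-1)^{q(q-1)m/2}(m!)^q$, $m = (p-1)/2$, and the index shift $D^p_{(2s-q)(p-1)-\varepsilon}$ — agrees with May's normalization of $P^s$ and $\beta P^s$. May's Adem relations (his Theorem~4.7) are stated with a specific sign and binomial-coefficient convention; since our $P_s$ differs from his $P^s$ only by conventions that are constant on each homological degree and built into the definition, the translation is a bookkeeping matter. Once the dictionary is fixed, the displayed relations for $p = 2$ (with $a > 2b$) and $p > 2$ (with $a > pb$) are precisely May's relations transported along it, so the proof concludes by citation.

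The main obstacle will be the first step: carefully justifying that a May--Steenrod structure in the sense of Definition~\ref{def: May--Steenrod structure} — which only records a $\mathrm{C}$-module map $\mathcal W \to \mathcal O$ together with an operadic factorization — supplies all the structure maps on composite arities that May uses to prove the Adem relations, and that the operations these maps define agree on homology with the $P_s$, $\beta P_s$ of our definitions. Everything else is either unwinding definitions or matching sign conventions. I expect the cleanest route is to package this compatibility as the statement that the $E_\infty$-operad $\mathcal R$, acting on $A$ through $\phi$, restricts to an $E_\infty$-action on $A \otimes \Fp$ of the kind May axiomatizes, so that his homological arguments apply without modification.
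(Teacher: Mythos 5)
Your proposal follows essentially the same route as the paper: both reduce to May's Theorem~4.7 by observing that the pair $(A,\theta)$ lies in May's category $\mathfrak{C}(\mathrm{C}_p,\infty,\mathbb{F}_p)$ and that the factorization $\psi = \phi\circ\iota$ through an $E_\infty$-operad supplies the composite-arity structure May requires --- the paper packages this as saying $(A,\theta)$ is an \emph{Adem object} in the sense of \cite[Definition~4.1]{may1970general}. The convention-matching you flag is implicit in the paper's setup, so your argument is correct and coincides with the paper's proof.
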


\begin{proof}
	As described after Definition~\ref{def: Steenrod products}, for any prime $p$ the pair $(A, \theta)$ is an object in May's category $\mathfrak{C}(\mathrm C_p, \infty, \mathbb{F}_p)$.
	Furthermore, since we are demanding a factorization $\phi \circ \iota$ with $\phi$ being an operad map from an $E_\infty$-operad to $\End_A$, the pair $(A, \theta)$ is an \textit{Adem object} in the sense of \cite[Definition 4.1]{may1970general} and the statement presented here is stated and proven as part of \cite[Theorem 4.7]{may1970general}.
\end{proof}

So far we have considered $\mathrm{C}$-modules, operads and related structures over the category of chain complexes.
It is also useful to consider them over the category of coalgebras, that is to say requiring each chain complex to be equipped with a coproduct and all structure maps to be morphisms of coalgebras.
As described in Definition~1.2 of \cite{may1970general}, the $\mathrm{C}$-module $\mathcal W$ lifts to this category.
A \textit{comultiplicative May--Steenrod structure} on an operad $\mathcal O$ is a morphism of $\mathrm{C}$-modules $\psi \colon \mathcal W \to \mathcal O$ for which there exists a factorization through an $E_\infty$-operad over the category of coalgebras
\begin{equation*}
\begin{tikzcd}[column sep = normal, row sep = small]
& \mathcal R \arrow[dr, "\phi", out=0] & \\
\mathcal W \arrow[ur, "\iota", out=55, in=180] \arrow[rr, "\psi"] & & \mathcal O
\end{tikzcd}
\end{equation*}
such that $\iota$ is a quasi-isomorphism over the category of coalgebras and $\phi$ is a morphism of operads.

Chain complexes equipped with a comultiplicative May--Steenrod structure satisfy the so-called \textit{Cartan relations}.

\begin{lemma}
	Let $A$ be equipped with a comultiplicative May--Steenrod structure.
	For any two mod $p$ homology classes $[\alpha]$ and $[\beta]$ we have
	\begin{equation*}
	P_s\big([\alpha] [\beta]\big) = \sum_{i+j=s} P_i\big( [\alpha] \big) P_j\big( [\beta] \big),
	\end{equation*}
\end{lemma}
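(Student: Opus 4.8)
The plan is to reduce the Cartan relation to the corresponding statement in May's framework, exactly as was done for the Adem relations in the previous lemma. First I would observe that a comultiplicative May--Steenrod structure $\psi\colon\mathcal W\to\End_A$ equips $A$ with a coproduct $\Delta\colon A\to A\otimes A$ (coming from the coalgebra enrichment) and, by restricting $\psi$ to arity $p$, with a map $\theta\colon\mathcal W(p)\otimes A^{\otimes p}\to A$ that is a morphism of coalgebras; this makes $(A,\theta,\Delta)$ into an object of May's \enquote{Cartan category}, i.e.\ the analogue of $\mathfrak C(\mathrm C_p,\infty,\mathbb F_p)$ internal to coalgebras, as set up in Definitions~1.2 and~5.1 of \cite{may1970general}. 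The key points to verify are that $\Delta$ is (co)associative and (co)commutative up to the coherent homotopies supplied by the $E_\infty$-operad $\mathcal R$ — which follows because $\phi$ is an operad map over coalgebras and $\mathcal R$ is a resolution of $\underline{\Z}$ — and that the $\mathrm C_p$-equivariant map $\theta$ is compatible with $\Delta$ in the sense required by May, which is precisely the content of $\psi$ and $\iota$ being morphisms over the category of coalgebras.

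Second, once $(A,\theta,\Delta)$ is identified as such an object, the Cartan formula is a formal consequence of May's general machine: the external product on the mod $p$ homology of $A\otimes A$ interacts with the operations $D^p_i$ via the Künneth map, because the diagonal of $\mathcal W(p)$ used to define the coproduct on $\mathcal W(p)\otimes A^{\otimes p}$ is the Alexander--Whitney-type diagonal chosen in \cite[Definition~1.2]{may1970general}; chasing this through the definition of $P_s$ from $D^p_{(2s-q)(p-1)-\varepsilon}$ in Definition~\ref{def: Steenrod operations at odd prime} (and from $D^2_{s-q}$ at $p=2$) yields the stated sum over $i+j=s$. Concretely, I would cite the relevant part of \cite[Theorem~5.?]{may1970general} (the Cartan-formula theorem in that paper) after establishing the object-of-the-category claim, in complete parallel with how the Adem lemma invoked \cite[Theorem~4.7]{may1970general}.

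The main obstacle is bookkeeping rather than conceptual: one must check that the coalgebra structure on $\mathcal W$ described in \cite[Definition~1.2]{may1970general} is the one that our factorization through $\mathcal R$ actually induces on the image $\psi(\mathcal W)$, so that the homotopies witnessing (co)commutativity of $\theta$ are the ones May's Cartan theorem expects; this is where the hypothesis that $\iota$ and $\phi$ are morphisms \emph{over the category of coalgebras} does the essential work, and it is the step I would write out most carefully. The sign and coefficient factors $\nu(q)$ in Definition~\ref{def: Steenrod operations at odd prime} also need to be tracked through the Künneth isomorphism, but these are handled once and for all inside May's argument, so no new computation is required here.

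\begin{proof}
	As explained after Definition~\ref{def: Steenrod products}, restricting $\psi$ to arity $p$ defines $\theta \colon \mathcal W(p) \otimes A^{\otimes p} \to A$; since the given May--Steenrod structure is comultiplicative, the factorization $\phi \circ \iota$ exists over the category of coalgebras, so $\theta$ and the coproduct on $A$ make the pair into an object of the Cartan category of \cite[Definitions 1.2 and 5.1]{may1970general}. The comultiplicativity of $\iota$ and $\phi$ guarantees that the coalgebra structure on $\mathcal W(p)$ is the Alexander--Whitney-type diagonal used there, and that the coherent homotopies for (co)commutativity and (co)associativity of the coproduct are those provided by the $E_\infty$-operad $\mathcal R$. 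The Cartan relation for the induced operations $P_s$ is then the content of the Cartan-formula theorem of \cite[Section 5]{may1970general}.
\end{proof}
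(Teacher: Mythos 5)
Your proposal is correct and follows essentially the same route as the paper: both identify $(A,\theta)$ as an object of May's category $\mathfrak{C}(\mathrm C_p,\infty,\mathbb F_p)$, use the comultiplicative hypothesis (the factorization $\phi\circ\iota$ over coalgebras) to upgrade it to a \emph{Cartan object} in May's sense, and then cite the Cartan-formula theorem of \cite{may1970general}. The only differences are cosmetic -- the paper points to p.~161 and p.~165 of May's paper rather than to numbered theorems, and does not spell out the bookkeeping about the diagonal on $\mathcal W(p)$ that you flag as the delicate step.
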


\begin{proof}
	As described after Definition~\ref{def: Steenrod products}, for any prime $p$ the pair $(A, \theta)$ is an object in May's category $\mathfrak{C}(\mathrm C_p, \infty, \mathbb{F}_p)$.
	Furthermore, since we are demanding a factorization $\phi \circ \iota$ with $\iota$ being a quasi-isomorphism in the category of $\mathrm{C}$-modules over the category of coalgebras, this is a \textit{Cartan object} as defined in \cite[p.161]{may1970general}.
	The statement presented here is stated in page~165 loc.~cit.
\end{proof}

For the even prime, effective proofs at the cochain level of the Adem and Cartan relations have been given respectively in \cite{medina2021adem} and \cite{medina2020cartan}.
Explicitly, these construct cochains whose coboundaries descends to the relations in cohomology.
	
\section{Effective constructions} \label{s:effective}

In this section we construct explicit May--Steenrod structures on three well know combinatorial $E_\infty$-operads: the Barratt--Eccles operad $\mathcal E$ (see \cite{berger2004combinatorial}), the surjection operad $\mathcal X$ (see \cite{mcclure2003multivariable}), and the operad $U(\mathcal M)$ associated to the finitely presented $E_\infty$-prop $\mathcal M$ introduced in \cite{medina2020prop1}.
We also define a natural and effective May--Steenrod structures on the normalized cochains of any simplicial or cubical set using that these are algebras over the operad $U(\mathcal M)$.

Figure~\ref{fig: bigsummary} presents a diagrammatical representation of the constructions in this section.

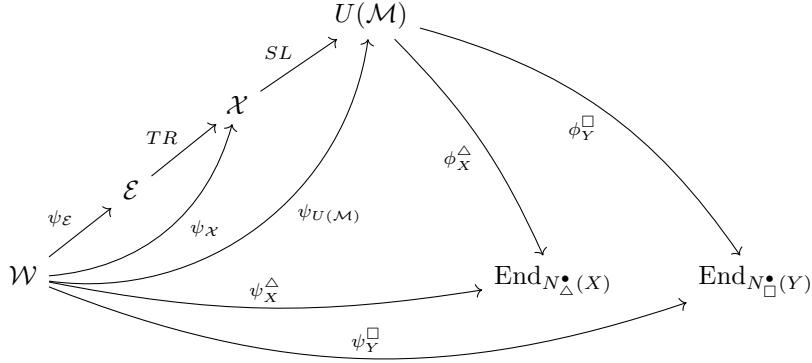
\begin{figure}[ht]
	\begin{tikzcd}
	& & & U(\mathcal M) \arrow[dddr, bend left=10, "\phi^\triangle_X"'] \arrow[dddrr, bend left=20, "\phi^\square_Y"'] & & \\
	& & \mathcal X \arrow[ru, "SL"] & & & \\
	& \mathcal E \arrow[ur, "TR"] & & & & \\
	\mathcal W \arrow[rrrr, "\psi^\triangle_X", bend right=10] \arrow[rrrrr, "\psi^\square_Y", bend right=20] \arrow[ur, "\psi_{\mathcal E}"] \arrow[uurr, "\psi_{\mathcal X}"', bend right=35, pos=.6] \arrow[uuurrr, "\psi_{U(\mathcal M)}"', bend right=45, pos=0.6] & & & & \End_{N^\bullet_\triangle(X)} & \End_{N^\bullet_\square(Y)}
	\end{tikzcd}
	\caption{Summary of effective constructions: May--Steenrod structures on the Barratt--Eccles $\mathcal E$, surjection $\mathcal X$, and $U(\mathcal M)$ operads, and natural May--Steenrod structures on the normalized chains of a simplicial or cubical set. We remark that the maps $TR$ and $SL$ require different sign conventions.}
	\label{fig: bigsummary}
\end{figure}

\subsection{Barratt--Eccles operad}

In this subsection we effectively describe a May--Steenrod structure on the Barratt--Eccles operad via explicit formulae.

We begin by reviewing the $\mathrm{S}$-module structure underlying the Barratt--Eccles operad and, since we will not use it in this work, refer to \cite{berger2004combinatorial} for a description of if composition structure.
For a non-negative integer $r$ define the simplicial set $E(\mathrm S_r)$ by
\begin{equation} \label{eq: milnor model of symmetric}
\begin{split}
E(\mathrm S_r)_n &= \{ (\sigma_0, \dots, \sigma_n)\ |\ \sigma_i \in \mathrm{S}_r\}, \\
d_i(\sigma_0, \dots, \sigma_n) &= (\sigma_0, \dots, \widehat{\sigma}_i, \dots, \sigma_n), \\
s_i(\sigma_0, \dots, \sigma_n) &= (\sigma_0, \dots, \sigma_i, \sigma_i, \dots, \sigma_n) \\
\end{split}
\end{equation}
with a left $\mathrm S_r$-action given by
\begin{equation*}
\sigma (\sigma_0, \dots, \sigma_n) = (\sigma \sigma_0, \dots, \sigma \sigma_n).
\end{equation*}
The chain complex resulting from applying the functor of normalized integral chains to it is the arity $r$ part of the Barratt--Eccles operad $\mathcal E$.

\begin{definition} \label{def: Steenrod-Adem on Barratt--Eccles}
	For every $r \geq 0$, let $\psi_{\mathcal E}(r) \colon \mathcal W(r) \to \mathcal E(r)$ be the $\Z[\mathrm{C}_r]$-linear map defined on basis elements by
	\begin{equation*}
	\psi_{\mathcal E}(r)(e_{n}) = \begin{cases}
	\displaystyle{\sum_{r_1, \dots, r_m}} \big(\rho^0, \rho^{r_1}, \rho^{r_1+1}, \rho^{r_2}, \dots, \rho^{r_{m}}, \rho^{r_{m}+1} \big) & n = 2m, \\
	\displaystyle{\sum_{r_1, \dots, r_m}} \big(\rho^0, \rho^1, \rho^{r_1}, \rho^{r_1+1}, \dots, \rho^{r_{m}}, \rho^{r_{m}+1} \big) & n = 2m+1,
	\end{cases}
	\end{equation*}
	where the sum is over all $r_1, \dots, r_m \in \{0, \dots, r-1\}$.
\end{definition}

\begin{theorem} \label{thm: Steenrod-Adem on Barratt--Eccles}
	The morphism of $\mathrm{C}$-modules
	\begin{equation*}
	\psi_{\mathcal E} \colon \mathcal W \to \mathcal E
	\end{equation*}
	defines a May--Steenrod structure on the Barratt--Eccles operad.
\end{theorem}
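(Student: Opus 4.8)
To exhibit a May--Steenrod structure on $\mathcal E$ we must produce a factorization $\psi_{\mathcal E} = \phi_{\mathcal E} \circ \iota$ through an $E_\infty$-operad with $\iota$ a quasi-isomorphism of $\mathrm{C}$-modules and $\phi_{\mathcal E}$ a morphism of operads. The natural candidate for the $E_\infty$-operad is $\mathcal E$ itself: the Barratt--Eccles operad is $E_\infty$, so if we can show $\psi_{\mathcal E}$ is a quasi-isomorphism onto its image in the relevant sense---more precisely, that it factors as $\mathcal W \xrightarrow{\iota} \mathcal E$ where $\iota$ is a quasi-isomorphism of $\mathrm{C}$-modules---then taking $\phi_{\mathcal E} = \mathrm{id}_{\mathcal E}$ gives the desired factorization. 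Thus the plan reduces to two checks: first, that $\psi_{\mathcal E}$ is a well-defined morphism of $\mathrm{C}$-modules, i.e.\ that it is a chain map (compatible with the differentials of $\mathcal W$ and $\mathcal E$) and $\mathrm{C}_r$-equivariant in each arity; second, that for each $r$ the map $\psi_{\mathcal E}(r)\colon \mathcal W(r) \to \mathcal E(r)$ is a quasi-isomorphism of complexes of $\Z[\mathrm{C}_r]$-modules.

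\textbf{Key steps.} First I would verify equivariance: since $\psi_{\mathcal E}(r)$ is declared $\Z[\mathrm{C}_r]$-linear and defined on the free generators $e_n$, there is nothing to check beyond the map being well-defined on the stated basis elements, which it visibly is. Second, and this is the computational heart, I would check that $\psi_{\mathcal E}(r)$ is a chain map, i.e.\ $\partial \circ \psi_{\mathcal E}(r) = \psi_{\mathcal E}(r) \circ \partial$. Recalling $\partial e_{2m} = N e_{2m-1}$ and $\partial e_{2m+1} = T e_{2m}$ with $N = 1 + \rho + \cdots + \rho^{r-1}$ and $T = \rho - 1$, I would apply the normalized-chains differential of $E(\mathrm{S}_r)$ (the alternating sum of face maps $d_i$, which delete entries) to the sums $\sum_{r_1,\dots,r_m}(\rho^0,\rho^{r_1},\rho^{r_1+1},\dots)$ and show telescoping cancellation occurs: adjacent summands with matched indices cancel in pairs, and the surviving boundary terms reassemble into $N$ or $T$ applied to $\psi_{\mathcal E}(e_{n-1})$. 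The degenerate-simplex quotient in normalized chains is what makes terms like $(\dots,\rho^{r_j},\rho^{r_j},\dots)$ vanish, which is essential for the bookkeeping. Third, with $\psi_{\mathcal E}$ established as a $\mathrm{C}$-module map, I would argue it is a quasi-isomorphism: both $\mathcal W(r)$ and $\mathcal E(r)$ compute the homology of $\mathrm{C}_r$ (the latter because $E(\mathrm{S}_r)$ restricted along $\mathrm{C}_r \hookrightarrow \mathrm{S}_r$ is a contractible free $\mathrm{C}_r$-simplicial set, hence $\mathcal E(r)$ is a free resolution of $\Z$ over $\Z[\mathrm{C}_r]$), and a map between free resolutions of $\Z$ over $\Z[\mathrm{C}_r]$ lifting the identity is automatically a quasi-isomorphism; one checks the induced map on $H_0$ is an isomorphism, sending the class of $e_0 = (\rho^0)$ to a generator.

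\textbf{Main obstacle.} The delicate point is the chain-map verification in arity $r$: the formulas for $\psi_{\mathcal E}(e_n)$ are sums over $m$-tuples of exponents in $\{0,\dots,r-1\}$, and applying the alternating face-map differential produces many terms that must cancel or degenerate, with the residual terms needing to rearrange exactly into $N \cdot \psi_{\mathcal E}(e_{n-1})$ or $T \cdot \psi_{\mathcal E}(e_{n-1})$. Getting the index ranges, the parity-dependent prefix $(\rho^0)$ versus $(\rho^0,\rho^1)$, and the interaction with the normalization quotient all consistent is where the real work lies; the rest is formal. I expect the author may simply cite that this $\psi_{\mathcal E}$ is, up to conventions, the classical map of Barratt--Eccles / Berger--Fresse computing the cyclic homology inside $\mathcal E$, and invoke that it is a known quasi-isomorphism, reducing the proof to a remark that Definition~\ref{def: e-infinity operad and prop} is satisfied with $\mathcal R = \mathcal E$ and $\phi = \mathrm{id}$.
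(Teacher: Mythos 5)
Your proposal is correct and follows essentially the same route as the paper: take $\mathcal R = \mathcal E$ and $\phi = \mathrm{id}$, so that everything reduces to showing each $\psi_{\mathcal E}(r)$ is a $\Z[\mathrm{C}_r]$-linear chain map and a quasi-isomorphism, the latter because both complexes have the homology of a point with $\psi_{\mathcal E}(e_0)$ hitting a generator. The only difference is in the bookkeeping of the chain-map check: where you propose direct telescoping of face maps, the paper organizes the computation as an induction by rewriting $\psi(e_n)$ as a cone $(\rho^0, N\cdot(-))$ or $(\rho^0, T\cdot(-))$ on the previous stage, which makes the cancellation automatic.
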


\begin{proof}
	Since $\mathcal E$ is an $E_\infty$-operad, we simply need to prove that the $\Z[\mathrm{C}_r]$-linear map
	\begin{equation*}
	\psi_{\mathcal E}(r) \colon \mathcal W(r) \to \mathcal E(r)
	\end{equation*}
	is a quasi-isomorphism for every $r \geq 0$.
	We simplify notation and write $\psi$ instead of $\psi_{\mathcal E}(r)$.
	To show $\psi$ is a chain map we proceed by induction.
	Notice that
	\begin{equation*}
	\psi(\partial e_0) = 0 = \partial \psi(e_0)
	\end{equation*}
	and assume $\psi(\partial e_{k-1}) = \partial \psi(e_{k-1})$.
	If $k = 2n$ we have
	\begin{align*}
	\partial \psi(e_{2n}) & =
	\partial \sum_{r_1, \dots, r_n}
	\big(\rho^0, \rho^{r_1}, \rho^{r_1+1}, \dots, \rho^{r_n}, \rho^{r_n+1} \big) \\ & =
	\partial \sum_{r_2, \dots, r_n} \sum_{r_1 = 0}^{p-1}
	\big(\rho^0, \rho^{r_1} \, (\rho^0, \rho^{1}, \dots, \rho^{r_n-r_1}, \rho^{r_n - r_1 +1}) \big) \\ & =
	\partial \sum_{r_2, \dots, r_n}
	\big(\rho^0, N\, (\rho^{0}, \rho^{1}, \dots, \rho^{r_n}, \rho^{r_n + 1}) \big) \\ & =
	\sum_{r_2, \dots, r_n}
	N\, \big( \rho^{0}, \rho^{1}, \dots, \rho^{r_n}, \rho^{r_n + 1} \big) \\ & -
	\sum_{r_2, \dots, r_n}
	\big(\rho^0, \partial \, N \, (\rho^{0}, \rho^{1}, \dots, \rho^{r_n}, \rho^{r_n+1}) \big) \\ & =
	N \psi(e_{2n-1}) - (\rho^0, \partial N \psi (e_{2n-1})) \\ & =
	\psi(N e_{2n-1}) - (\rho^0, \psi (\partial N e_{2n-1})) \\ & =
	\psi(\partial e_{2n}) - (\rho^0, \psi (\partial^2 e_{2n})) \\ & =
	\psi(\partial e_{2n}).
	\end{align*}
	If $k = 2n+1$ we have
	\begin{align*}
	\partial \psi(e_{2n+1}) & =
	\partial \sum_{r_1, \dots, r_n}
	\big(\rho^0, \rho^1, \rho^{r_1}, \rho^{r_1+1}, \dots, \rho^{r_n}, \rho^{r_n+1} \big) \\ & =
	\partial \sum_{r_1, \dots, r_n}
	\big(\rho^0, \rho^{1} \, (\rho^0, \rho^{r_1-1}, \rho^{r_1}, \dots, \rho^{r_n - 1}, \rho^{r_n}) \big) \\ & =
	\partial \sum_{r_1, \dots, r_n}
	\big(\rho^0, T\, (\rho^0, \rho^{r_1-1}, \rho^{r_1}, \dots, \rho^{r_n - 1}, \rho^{r_n}) \big) \\ & =
	\sum_{r_1, \dots, r_n}
	T \, \big( \rho^0, \rho^{r_1-1}, \rho^{r_1}, \dots, \rho^{r_n - 1}, \rho^{r_n} \big) \\ & -
	\sum_{r_1, \dots, r_n}
	\big(\rho^0, \partial \, T \, (\rho^0, \rho^{r_1-1}, \rho^{r_1}, \dots, \rho^{r_n - 1}, \rho^{r_n}) \big) \\ & =
	T \psi(e_{2n}) - (\rho^0, \partial T \psi (e_{2n})) \\ & =
	\psi(T e_{2n}) - (\rho^0, \psi (\partial T e_{2n})) \\ & =
	\psi(\partial e_{2n+1}) - (\rho^0, \psi (\partial^2 e_{2n+1})) \\ & =
	\psi(\partial e_{2n+1})
	\end{align*}
	where for the third equality we used that for any $r_1, \dots, r_n$
	\begin{equation*}
	(\rho^0, \rho^0, \rho^{r_1-1}, \rho^{r_1}, \dots, \rho^{r_n - 1}, \rho^{r_n}) = 0.
	\end{equation*}
	This map is a quasi-isomorphism since both complexes have the homology of a point and $\psi(e_0)$ represents a generator of the homology.
\end{proof}

\begin{example}
	Table \ref{fig: Barratt--Eccles Steenrod products} shows $\psi_{\mathcal E}(r)(e_n)$ for small values of $r$ and $n$.
\end{example}

\begin{table}[ht]
	\centering
	\resizebox{0.8\columnwidth}{!}{%
		\renewcommand{\arraystretch}{1.2}
		\begin{tabular}{|c||c|c|c|}
			\hline
			$r$ & $n=2$ & $n=3$ & $n=4$ \\
			\hline
			2 & (0,1,0) & (0,1,0,1) & (0,1,0,1,0) \\
			\hline
			3 & (0,1,2) + (0,2,0) & (0,1,2,0) + (0,1,0,1) & \phantom{+} (0,1,2,0,1) + (0,1,2,1,2) \\
			& & & + (0,2,0,1,2) + (0,2,0,2,0) \\
			\hline
			4 & (0,1,2) + (0,2,3) & (0,1,2,3) + (0,1,3,0) & \phantom{+} (0,1,2,3,0) + (0,1,2,0,1) \\
			& + (0,3,0) & + (0,1,0,1) &
			+ (0,1,2,1,2) + (0,2,3,0,1) \\
			& & & + (0,2,3,1,2) + (0,2,3,2,3) \\
			& & & + (0,3,0,1,2) + (0,3,0,2,3) \\
			& & & + (0,3,0,3,0) \\
			\hline
		\end{tabular}
	}
	\vspace*{3pt}
	\caption{The elements $\psi_{\mathcal E}(r)(e_n)$ for small values of $r$ and $n$ where we are denoting $(\rho^{r_0}, \dots, \rho^{r_n})$ simply by $(r_0, \dots, r_n)$.}
	\label{fig: Barratt--Eccles Steenrod products}
\end{table}

\begin{remark}
	The natural construction \eqref{eq: milnor model of symmetric} is defined for any group, in particular, for finite cyclic groups, and the inclusion $\iota \colon \mathrm C_r \to \mathrm S_r$ induces both: a simplicial inclusion $E(\iota) \colon E(\mathrm C_r) \to E(\mathrm S_r)$ and one of $\mathrm C$-modules
	\begin{equation*}
	N_\bullet E(\iota) \colon N_\bullet E(\mathrm C) \to N_\bullet E(\mathrm S) = \mathcal E.
	\end{equation*}
	We remark that the image of our map $\psi_{\mathcal E}$ lies in the subcomplex $N_\bullet E(\mathrm C)$, so the map $\psi_{\mathcal E}$ factors as
	\begin{equation*}
	\psi_{\mathcal E} \colon
	\begin{tikzcd}
	\mathcal W \arrow[r] &[-3pt] N_\bullet E(\mathrm C) \arrow[r, "N_\bullet E(\iota)\,"] &[5pt] \mathcal E.
	\end{tikzcd}
	\end{equation*}
\end{remark}

\subsection{Surjection operad}

In this subsection we introduce a May--Steenrod structure on the surjection operad.
There are two widely used sign conventions for this operad respectively due to McClure--Smith \cite{mcclure2003multivariable} and Berger--Fresse \cite{berger2004combinatorial}.
Using the Berger--Fresse convention, we can define a May--Steenrod structure on the surjection operad by composing the map $\psi_{\mathcal E}$ with the \textit{table reduction} quasi-isomorphism $\mathcal E \to \mathcal X$ introduced in Section 1.3 of \cite{berger2004combinatorial}.
We define a May--Steenrod structure on the surjection operad in a convention independent way which recovers the table reduction May--Steenrod structure in the Berger--Fresse case.

Let us start by recalling the definition of the surjection operad.
For a non-negative integer $r$ let $\mathcal X(r)_n$ be the free abelian group generated by all functions from $\{1, \dots, n+r\}$ to $\{1, \dots, r\}$ modulo the subgroup generated by degenerate functions, i.e., those which are either non-surjective or have a pair of equal consecutive values.
We only describe the McClure--Smith convention since we refer to it in subsequent sections.
The boundary map and symmetric action in this case is defined using the Koszul convention regarding a surjection $s \colon \{1, \dots, n+r\} \to \{1, \dots, r\} $ as the top-dimensional generator in the chain complex
\begin{equation} \label{eq: surjection boundary}
\bigotimes_{i = 1}^r N_\bullet \big( \triangle^{s^{-1}(i)}; \Z \big).
\end{equation}
Explicitly, if we think of $s$ as a sequence of integers $\big( s(1), \dots, s(n+r) \big)$ the boundary of $s$ is the sum of sequences obtained by orderly removing one at a time the occurrences of $1$ with alternating signs, then those of $2$ with starting sign equal to that of the last removal of $1$, and so on.
Since we do not use the composition structure we refer to \cite{mcclure2003multivariable} for it.

Let us recall the chain contraction of $\mathcal X(r)$ onto $\mathcal X(r-1)$ used by McClure--Smith to prove that $\mathcal X$ is an $E_\infty$-operad and whose introduction is credited to Benson \cite{benson1998rep}.
Let the maps $i$, $p$, and $s$ be defined on basis elements, represented by sequences, as follows: $i \colon \mathcal X(r-1) \to \mathcal X(r)$ places a $1$ at the beginning of the sequence and increases each of the original entries by $1$, $p \colon \mathcal X(r) \to \mathcal X(r-1)$ takes the sequence to $0$ unless it contains a single occurrence of $1$, in which case $p$ removes the $1$ and decreases each of the remaining entries by $1$, and finally, $s \colon \mathcal X(r) \to \mathcal X(r)$ places a 1 at the beginning of the sequence; if the sequence already begins
with a 1, then the new sequence is degenerate so $s$ takes it to $0$.
These maps satisfy
\begin{equation*}
p i = \mathrm{id}
\qquad \text{ and } \qquad
\mathrm{id} - i p = \partial s + s \partial.
\end{equation*}
The compositions $i^{r-1}$ and $s^{r-1}$ define a contraction of $\mathcal X(r)$ onto $\mathcal X(1) \cong \Z$ with homotopy
\begin{equation*}
h = s + i\, s\, p + \cdots + i^{r-1} s\, p^{r-1},
\end{equation*}
i.e., they satisfy
\begin{equation*}
p^{r-1} i^{r-1} = \mathrm{id}
\qquad \text{ and } \qquad
\partial h + h \partial = \mathrm{id} - i^{r-1}\,p^{r-1}.
\end{equation*}

\begin{definition} \label{def: Steenrod-Adem on surjection}
	For every $r \geq 0$, let $\psi_{\mathcal X}(r) \colon \mathcal W(r) \to \mathcal X(r)$ be the $\Z[\mathrm{C}_r]$-linear map defined recursively on basis elements by
	\begin{align*}
	\psi_{\mathcal X}(r)(e_0) & = (1, \dots, r), \\
	\psi_{\mathcal X}(r)(e_{2m+1}) & = h\,T\,\psi_{\mathcal X}(r)(e_{2m}), \\
	\psi_{\mathcal X}(r)(e_{2m}) & = h\,N\,\psi_{\mathcal X}(r)(e_{2m-1}),
	\end{align*}
	where $T$ and $N$ are defined in \eqref{eq: T and R definition}.
\end{definition}

\begin{theorem} \label{thm: Steenrod-Adem on surjection MS convention}
	The morphism of $\mathrm{C}$-modules
	\begin{equation*}
	\psi_{\mathcal X} \colon \mathcal W \to \mathcal X
	\end{equation*}
	defines a May--Steenrod structure on the Surjection operad.
\end{theorem}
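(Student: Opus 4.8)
The plan is to exploit that the surjection operad is itself an $E_\infty$-operad, so that in Definition~\ref{def: May--Steenrod structure} one may take the intermediate $E_\infty$-operad $\mathcal R$ to be $\mathcal X$ with $\phi = \id$; the required factorization is then $\psi_{\mathcal X} = \id \circ\, \psi_{\mathcal X}$, and everything reduces to showing that $\psi_{\mathcal X}$ is a morphism of $\mathrm C$-modules such that each $\psi_{\mathcal X}(r)$ is a quasi-isomorphism. Since each $\psi_{\mathcal X}(r)$ is $\Z[\mathrm C_r]$-linear by construction, the two points to check are that it is a chain map and that it induces an isomorphism on homology. Throughout I write $\psi$ for $\psi_{\mathcal X}(r)$.

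That $\psi$ is a chain map I would prove by induction on homological degree, using in $\mathcal W(r)$ the relations $\partial e_0 = 0$, $\partial e_{2m} = N e_{2m-1}$ and $\partial e_{2m+1} = T e_{2m}$, the vanishing $NT = TN = 0$ in $\Z[\mathrm C_r]$, the fact that multiplication by $N$ and by $T$ commutes with $\partial$, and the contraction identity $\partial h + h\partial = \id - i^{r-1}p^{r-1}$. In the odd step, for instance,
\[
\partial\psi(e_{2m+1}) = \partial h\, T\, \psi(e_{2m}) = T\psi(e_{2m}) - i^{r-1}p^{r-1}T\psi(e_{2m}) - h\, T\, \partial\psi(e_{2m});
\]
the inductive hypothesis identifies $\partial\psi(e_{2m})$ with $N\psi(e_{2m-1})$ (and with $0$ when $m = 0$), so the last summand vanishes because $TN = 0$, and --- granting that the correction term $i^{r-1}p^{r-1}T\psi(e_{2m})$ vanishes as well --- one is left with $T\psi(e_{2m}) = \psi(Te_{2m}) = \psi(\partial e_{2m+1})$, as desired. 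The even step is the mirror image using $NT = 0$, and the base case reads $\partial\psi(e_0) = \partial(1,\dots,r) = 0 = \psi(\partial e_0)$.

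The crux, and the only point where the combinatorics of $\mathcal X$ enters, is the vanishing of the correction terms $i^{r-1}p^{r-1}T\psi(e_{2m})$ and $i^{r-1}p^{r-1}N\psi(e_{2m-1})$. I would settle this with two observations. First, the map $p$ preserves homological degree while $\mathcal X(1)$ is concentrated in degree $0$, so $p^{r-1}$ annihilates every element of positive degree; since $\psi$ preserves degree, this already covers both terms when $m \geq 1$. Second, in the remaining degree-zero case $T\psi(e_0) = \rho\cdot(1,\dots,r) - (1,\dots,r)$ is a difference of two permutations of $\{1,\dots,r\}$, and $p^{r-1}$ carries any such permutation to the generator $(1)$ of $\mathcal X(1)_0$ --- at each stage the value $1$ occurs exactly once and is removed --- so the difference maps to $0$. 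This is the step I expect to demand the most attention; the rest of the induction is formal.

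Finally, for the homology statement I would argue as in the Barratt--Eccles case: $\mathcal W(r)$ has the homology of a point by construction and so does $\mathcal X(r)$ because $\mathcal X$ is an $E_\infty$-operad, hence $\psi$ is a quasi-isomorphism as soon as $\psi(e_0)$ represents a generator of $H_0(\mathcal X(r)) \cong \Z$; and $\psi(e_0) = (1,\dots,r)$ is the class of the identity permutation, which is such a generator. (Equivalently, $\psi$ is a morphism of free $\Z[\mathrm C_r]$-resolutions of $\Z$ lifting the identity of $\Z$, hence automatically a quasi-isomorphism.) Assembling these facts over all $r$, the arities $r \leq 1$ being immediate since then $\mathcal X(r) \cong \Z$, yields the factorization through the $E_\infty$-operad $\mathcal X$ required by Definition~\ref{def: May--Steenrod structure}, which is the assertion of the theorem.
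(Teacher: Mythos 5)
Your proposal is correct and follows essentially the same route as the paper: reduce to showing each $\psi_{\mathcal X}(r)$ is a $\Z[\mathrm C_r]$-linear quasi-isomorphism, prove the chain-map property by induction using $\partial h + h\partial = \id - i^{r-1}p^{r-1}$ together with $TN = NT = 0$, and conclude since both complexes are resolutions of $\Z$ with $\psi(e_0) = (1,\dots,r)$ generating $H_0$. The only difference is that you explicitly justify the vanishing of the correction terms $i^{r-1}p^{r-1}T\psi(e_{2m})$ and $i^{r-1}p^{r-1}N\psi(e_{2m-1})$ (via the degree argument and the $m=0$ computation on permutations), a point the paper's displayed calculation passes over silently; your treatment is, if anything, the more complete one.
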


\begin{proof}
	Since $\mathcal X$ is an $E_\infty$-operad, we simply need to prove that the $\Z[\mathrm{C}_r]$-linear map
	\begin{equation*}
	\psi_{\mathcal X}(r) \colon \mathcal W(r) \to \mathcal X(r)
	\end{equation*}
	introduced in Definition \ref{def: Steenrod-Adem on surjection} is a quasi-isomorphism for every $r \geq 0$.
	We simplify notation and write $\psi$ instead of $\psi_{\mathcal X}(r)$.
	To show that $\psi$ is a chain map we proceed by induction.
	Notice that
	\begin{equation*}
	\psi(\partial e_0) = 0 = \partial \psi(e_0)
	\end{equation*}
	and assume $\psi(\partial e_{n-1}) = \partial \psi(e_{n-1})$.
	For $n = 2m+1$ we have
	\begin{align*}
	\partial \psi(e_{2m+1})
	& =
	\partial\, h\, T\, \psi(e_{2m}) \\
	& =
	T\, \psi(e_{2m}) - i^{r-1} p^{r-1}\, \psi(e_{2m}) -
	h\, \partial\, T\, \psi(e_{2m}) \\
	& =
	T\, \psi(e_{2m}) -
	h\, T\, \psi(\partial\, e_{2m}) \\
	& =
	T\, \psi(e_{2m}) -
	h\, \psi(T\,N\, e_{2m-1}) \\
	& =
	T \psi(e_{2m}).
	\end{align*}
	For $n = 2m$ the proof is analogous.
	The chain map $\psi$ is a quasi-isomorphism since both complexes have the homology of a point and $\psi(e_0) = (1, \dots, r)$ represents a generator of the homology.
\end{proof}

For the rest of this paper we use the McClure--Smith sign convention on $\mathcal X$.

\begin{example}
	Table \ref{fig: surjection Steenrod products} shows $\psi_{\mathcal X}(r)(e_n)$ for small values of $r$ and $n$.
\end{example}

\begin{table}[h]
	\centering
	\resizebox{\columnwidth}{!}{%
		\renewcommand{\arraystretch}{1.3}
		\begin{tabular}{|c||c|c|c|}
			\hline
			$r$& $n=2$ & $n=3$ & $n=4$ \\
			\hline
			2 & (1,2,1,2) & (1,2,1,2,1) & (1,2,1,2,1,2) \\
			\hline
			3 & (1,2,3,1,2) + (1,3,1,2,3) & (1,2,3,1,2,3) + (1,2,1,2,3,1) & \phantom{+} (1,2,3,1,2,3,1) + (1,2,3,2,3,1,2) \\
			& + (1,2,3,2,3) & + (1,2,3,1,3,1) & + (1,2,3,1,2,1,2) + (1,3,1,2,3,1,2) \\
			& & & + (1,3,1,3,1,2,3) + (1,2,3,2,3,2,3) \\
			& & & + (1,3,1,2,3,2,3) \\
			\hline
			4 & \phantom{+} (1,2,3,4,1,2) + (1,3,4,1,2,3) & \phantom{+} (1,2,3,4,1,2,3) + (1,2,4,1,2,3,4) & \\
			& + (1,2,3,4,2,3) + (1,4,1,2,3,4) & + (1,2,3,4,1,3,4) + (1,2,1,2,3,4,1) & 25 \text{ terms } \\
			& + (1,2,4,2,3,4) + (1,2,3,4,3,4) & + (1,2,3,1,3,4,1) + (1,2,3,4,1,4,1) & \\
			\hline
		\end{tabular}
	}
	\vspace*{2pt}
	\caption{The values of $\psi_{\mathcal X}(r)(e_n)$ for small values of $r$ and $n$.}
	\label{fig: surjection Steenrod products}
\end{table}

\subsection{The $E_\infty$-prop $\mathcal M$}

We start by reviewing the finitely presented $E_\infty$-prop introduced in \cite{medina2020prop1}.
Let $\mathcal M$ be the prop generated by
\begin{equation*}
\counit \in \mathcal M(1,0)_0, \hspace*{.6cm} \coproduct \in \mathcal M(1,2)_0, \hspace*{.6cm} \product \in \mathcal M(2,1)_1,
\end{equation*}
with boundary
\begin{equation*}
\partial\ \counit = 0, \hspace*{.6cm} \partial \ \coproduct = 0, \hspace*{.6cm} \partial \ \product = \ \boundary \, ,
\end{equation*}
and restricted by the relations
\begin{equation*}
\productcounit, \hspace*{.6cm} \leftcounitality \, , \hspace*{.6cm} \rightcounitality \, .
\end{equation*}

The second named author established in \cite[Theorem 3.3]{medina2020prop1} that $\mathcal M$ is an $E_\infty$-prop as introduced in Definition~\ref{def: e-infinity operad and prop}.
More precisely, it shows that the associated operad $U_2(\mathcal M) = \big\{ \mathcal M(1, r) \big\}_{r \geq 0}$ is an $E_\infty$-operad.
For the remainder of this article we write $U(\mathcal M)$ instead of $U_2(\mathcal M)$.

We will define a May--Steenrod structure $\psi_{U(\mathcal M)} \colon \mathcal W \to U(\mathcal M)$ by composing the May--Steenrod structure $\psi_{\mathcal X} \colon \mathcal W \to \mathcal X$ with a morphism $SL \colon \mathcal X \to U(\mathcal M)$ of $\mathrm{C}$-modules ($\mathrm{S}$-modules in fact) that we now define.
Given a surjection $s \colon \{1, \dots, n+r\} \to \{1, \dots, r\}$ let $SL(s)$ be the element represented by the immersed connected $(1,r)$-graph:

\begin{center}
	\begin{tikzpicture}[scale=1]
\draw (0,0)--(0,-.6) node[below, scale=.75]{$1$};
\draw (0,0)--(.5,.5);
\draw (-.3, .3)-- (-.2,.5) node[scale=.75] at (-.2,.7) {\qquad $1\, \ \ 2\ \ ...\ \ k_1$};
\draw (-.5,.5)--(0,0);
\node[scale=.75] at (.11,.4){$..$.};

\node[scale=.75] at (1,0){$\cdots$};
\node[scale=.75] at (1,-.9){$\cdots$};

\draw (2,0)--(2,-.68) node[scale=.75, below]{$r$};
\draw (2,0)--(2.5,.5);
\draw (1.7, .3)--(1.8,.5) node[scale=.75] at (1.78,.7) {\qquad $1\, \ \ 2\ \ ...\ \ k_r$};
\draw (1.5,.5)--(2,0);
\node[scale=.75] at (2.11,.4){$..$.};

\draw (1,2.5)--(1,3) node[scale=.75, above]{$1$};
\draw (1,2.5)--(0,2) node[scale=.75, below]{$1$};
\draw (.25,2.125)--(.5,2) node[scale=.75, below]{$2$};
\draw (.5,2.25)--(1,2) node[scale=.75, below]{$3$};
\draw (1,2.5)--(2,2) node[scale=.75, below]{\ \quad $n + r$};
\node[scale=.75] at (1.5,1.75){$\cdots$};

\node[scale=.75] at (1,1.3) {$\vdots$};

\node at (3.09,0){};
\end{tikzpicture}
\end{center}

that has no internal vertices and such that the $n+r$ strands at the top are orderly connected to the strands at the bottom following the values of $s$.

It can be directly verified using the presentation of $\mathcal M$ that the boundary of $SL(s)$ is obtained by removing strands one at a time in the order they are attached at the bottom.
This is precisely the image of $SL(\partial s)$ according to \eqref{eq: surjection boundary}.
Furthermore, relabeling the bottom edges agrees with the permutation of preimages of the associated surjection.
Since both operads have the homology of a point and $(1, \dots, r)$ is sent to a representative of a homology generator we have proven the following theorem.

\begin{theorem} \label{thm: Steenrod-Adem on U(M)}
	The composition
	\begin{equation*}
	\psi_{U(\mathcal M)} \colon \mathcal W \xra{\psi_{\mathcal X}} \mathcal X \xra{SL} U(\mathcal M)
	\end{equation*}
	defines a May--Steenrod structure on $U(\mathcal M)$.
\end{theorem}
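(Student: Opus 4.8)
The plan is to reduce the statement to two already-available facts: that $\psi_{\mathcal X} \colon \mathcal W \to \mathcal X$ is a May--Steenrod structure (Theorem~\ref{thm: Steenrod-Adem on surjection MS convention}) and that $U(\mathcal M)$ is an $E_\infty$-operad (Theorem 3.3 of \cite{medina2020prop1}). Since a May--Steenrod structure on an $E_\infty$-operad is, by Definition~\ref{def: May--Steenrod structure}, nothing more than a morphism of $\mathrm C$-modules $\mathcal W \to U(\mathcal M)$ that is a quasi-isomorphism in each arity (one may take $\mathcal R = U(\mathcal M)$, $\phi = \id$, $\iota = \psi_{U(\mathcal M)}$), it suffices to verify that $SL \colon \mathcal X \to U(\mathcal M)$ is a morphism of $\mathrm C$-modules and that $SL(r) \colon \mathcal X(r) \to U(\mathcal M)(r)$ is a quasi-isomorphism for every $r$; the composite of a quasi-isomorphism with $\psi_{\mathcal X}(r)$ is then again a quasi-isomorphism.

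First I would check that $SL$ is a well-defined map of graded abelian groups: a degenerate surjection (non-surjective or with a repeated consecutive value) produces a $(1,r)$-graph that is zero by the relations defining $\mathcal M$ (the counitality relations \leftcounitality, \rightcounitality\ kill repeated consecutive values at the top, and non-surjectivity forces an empty bouquet over some output, killed by \productcounit). Next, I would verify $SL$ is a chain map: by the description of $\partial$ in $\mathcal M$ via $\partial\,\product = \boundary$, the boundary of the graph $SL(s)$ is computed by removing the bottom strands one at a time in their attaching order with the appropriate alternating signs, and this is exactly the recipe for $\partial s$ read off from the tensor product \eqref{eq: surjection boundary}; this is the verification the excerpt already sketches. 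Equivariance under $\mathrm C_r$ (indeed $\mathrm S_r$) follows because relabelling the bottom outputs of the graph corresponds precisely to permuting the blocks $s^{-1}(i)$, i.e.\ to the $\mathrm S_r$-action on $\mathcal X(r)$.

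The remaining point, and the one deserving the most care, is that $SL(r)$ is a quasi-isomorphism. Both $\mathcal X(r)$ and $U(\mathcal M)(r) = \mathcal M(1,r)$ have the homology of a point (the former by the Benson contraction recalled above, the latter since $U(\mathcal M)$ is $E_\infty$), so it is enough to check that $SL(r)$ sends $(1,\dots,r)$ — a representative of the homology generator of $\mathcal X(r)$ — to a representative of the homology generator of $\mathcal M(1,r)$. The image $SL(1,\dots,r)$ is the $(1,r)$-graph with a single trivial strand over each output and all $r$ of them grafted, in order, onto the single input; this is the iterated coproduct $\coproduct$, which is a $0$-cycle representing the canonical generator of $H_0\bigl(\mathcal M(1,r)\bigr) = R$ (it is the image under the $E_\infty$-structure map of the generator of $H_0$ of the terminal operad). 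Hence $SL(r)$ is a quasi-isomorphism, $\psi_{U(\mathcal M)}(r) = SL(r)\circ\psi_{\mathcal X}(r)$ is a quasi-isomorphism, and since $U(\mathcal M)$ is $E_\infty$ this exhibits $\psi_{U(\mathcal M)}$ as a May--Steenrod structure. The main obstacle is purely bookkeeping: matching the sign conventions in the surjection boundary \eqref{eq: surjection boundary} with those coming from $\partial\,\product = \boundary$ in $\mathcal M$, so that $SL(\partial s) = \partial\, SL(s)$ holds on the nose rather than up to sign; once the conventions are pinned down consistently the rest is formal.
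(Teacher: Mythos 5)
Your proposal is correct and follows essentially the same route as the paper: the paper's argument (given in the paragraph preceding the theorem) likewise verifies that $SL$ is an equivariant chain map by matching the boundary of the graph $SL(s)$ with $SL(\partial s)$ via \eqref{eq: surjection boundary}, and then concludes from both complexes having the homology of a point and $(1,\dots,r)$ being sent to a homology generator. Your additional checks (well-definedness on degenerate surjections, identification of $SL(1,\dots,r)$ with the iterated coproduct) are just more explicit versions of what the paper compresses into ``it can be directly verified using the presentation of $\mathcal M$.''
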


\begin{example}
	The following immersed $(1,2)$-graphs are the elements $\psi_{U(\mathcal M)}(2)(e_n)$ for small values of $n$:
	\begin{center}
		\begin{tikzpicture}[scale=.55]
\draw (1,3.7) to (1,3);

\draw (1,3) to [out=205, in=90] (0,0);
\draw (1,3) to [out=-25, in=90] (2,0);

\node at (1,-.5){$n=0$};
\end{tikzpicture}\hspace*{1cm}
\begin{tikzpicture}[scale=.55]
\draw (1,3.7) to (1,3);

\draw (1,3) to [out=205, in=90] (0,0);

\draw [shorten >= 0cm] (.6,2.73) to [out=-100, in=90] (2,0);

\draw [shorten >= .15cm] (1,3) to [out=-25, in=30, distance=1.1cm] (1,1.5);
\draw [shorten <= .1cm] (1,1.5) to [out=210, in=20] (0,1);

\node at (1,-.5){$n=1$};
\end{tikzpicture}\hspace*{1cm}
\begin{tikzpicture}[scale=.55]
\draw (1,3.7) to (1,3);

\draw (1,3) to [out=205, in=90] (0,0);
\draw (1,3) to [out=-25, in=90] (2,0);

\draw [shorten >= 0cm] (.6,2.73) to [out=210, in=135] (1,1.5);
\draw [shorten <= 0cm] (1,1.5) to [out=-45, in=170] (2,1);

\draw [shorten >= .1cm] (1.4,2.73) to [out=-30, in=45] (1,1.5);
\draw [shorten <= .1cm] (1,1.5) to [out=-135, in=10] (0,1);

\node at (1,-.5){$n=2$};
\end{tikzpicture}
	\end{center}
\end{example}

\subsection{Cochains of simplicial sets}

In this subsection we introduce a natural May--Steenrod structure on the normalized cochains of any simplicial set $X$.
Since a May--Steenrod structure was constructed in the previous section for $U(\mathcal M)$, we only need to describe a natural $U(\mathcal M)$-algebra structure on $N^\bullet(X)$.
Using the linear duality functor, it suffices to construct a natural $U(\mathcal M)$-coalgebra structure on $N_\bullet(X)$ which, in turn, can be derived via a Kan extension argument from one on each $N_\bullet(\triangle^n)$.
We obtain these coalgebra structures by restricting a full $\mathcal M$-bialgebra structure.
An $\mathcal M$-bialgebra structure is specified by three linear maps, the images of the generators
\begin{equation*}
\counit, \quad \coproduct, \quad \product,
\end{equation*}
satisfying the relations in the presentation of $\mathcal M$.
For $n \in \mathbb{N}$, define the following: \vspace*{5pt} \\
(1) Define the counit $\epsilon \in \Hom(N_\bullet(\triangle^n), \Z)$ by
\begin{equation*}
\epsilon \big( [v_0, \dots, v_q] \big) = \begin{cases} 1 & \text{ if } q = 0, \\ 0 & \text{ if } q>0. \end{cases}
\end{equation*}
(2) Define the coproduct $\Delta \in \Hom(N_\bullet(\triangle^n), N_\bullet(\triangle^n)^{\otimes2})$ by
\begin{equation*}
\Delta \big( [v_0, \dots, v_q] \big) = \sum_{i=0}^q [v_0, \dots, v_i] \otimes [v_i, \dots, v_q].
\end{equation*}
(3) Define the product $\ast \in \Hom(N_\bullet(\triangle^n)^{\otimes 2}, N_\bullet(\triangle^n))$ by
\begin{equation*}
\left[v_0, \dots, v_p \right] \ast \left[v_{p+1}, \dots, v_q\right] = \begin{cases} (-1)^{p+|\pi|} \left[v_{\pi(0)}, \dots, v_{\pi(q)}\right] & \text{ if } v_i \neq v_j \text{ for } i \neq j, \\
0 & \text{ if not}, \end{cases}
\end{equation*}
where $\pi$ is the permutation that orders the totally ordered set of vertices, and $(-1)^{|\pi|}$ its sign.

\begin{proposition}[\cite{medina2020prop1}] \label{prop: simplicial chain bialgebra}
	For every $n \in \mathbb{N}$, the assignment
	\begin{equation*}
	\counit \mapsto \epsilon, \quad \coproduct \mapsto \Delta, \quad \product \mapsto \ast,
	\end{equation*}
	defines a natural $\mathcal M$-bialgebra structure on $N_\bullet(\triangle^n)$, and, via a Kan extension argument, a natural $U(\mathcal M)$-algebra structure $\phi^\triangle \colon U(\mathcal M) \to \End_{N^\bullet (X)}$ on the cochains of any simplicial set $X$.
\end{proposition}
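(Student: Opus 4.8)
The plan is to verify two separate things: first, that the assignment on generators actually descends to an $\mathcal M$-bialgebra structure on each $N_\bullet(\triangle^n)$, i.e.\ that $\epsilon$, $\Delta$, $\ast$ satisfy the defining relations of $\mathcal M$; and second, that the resulting coalgebra structures on the $N_\bullet(\triangle^n)$ assemble, via left Kan extension along $\triangle \hookrightarrow \sSet$, into a natural $U(\mathcal M)$-coalgebra structure on $N_\bullet(X)$ for arbitrary $X$, whence linear duality produces the advertised $U(\mathcal M)$-algebra structure $\phi^\triangle$ on $N^\bullet(X)$.

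For the first part, I would recall that $\mathcal M$ is generated by $\counit$, $\coproduct$, $\product$ with $\partial\,\product = \boundary$ and subject to the relations $\productcounit$, $\leftcounitality$, $\rightcounitality$. So one checks: (i) $\Delta$ is coassociative and counital with respect to $\epsilon$ — both are classical facts about the Alexander--Whitney diagonal on simplicial chains; (ii) $\ast$ is a chain homotopy exhibiting the two composites $\Delta$-then-project as homotopic, concretely $\partial\ast + \ast\partial^{\otimes} = $ the relevant difference, which is the content of $\partial\,\product = \boundary$ and amounts to the standard computation that $\ast$ is the ``join'' chain homotopy; (iii) the three unitality/counitality relations, which are finite identities on the explicit formulas for $\epsilon$, $\Delta$, $\ast$ evaluated on basis simplices $[v_0,\dots,v_q]$. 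Since this is precisely Theorem 3.3 and the surrounding discussion in \cite{medina2020prop1}, I would cite that work for the verification rather than reproduce it; the excerpt already attributes the proposition to that source.

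For the second part, I would use that every simplicial set is a colimit $X \cong \colim_{\triangle^n \to X} \triangle^n$ and that $N_\bullet(-;R)$ preserves this colimit, $N_\bullet(X) \cong \colim_{\triangle^n \to X} N_\bullet(\triangle^n)$. The $\mathcal M$-bialgebra (hence $U(\mathcal M)$-coalgebra) structure on each $N_\bullet(\triangle^n)$ is natural in $[n]$ — the coface and codegeneracy maps induce chain maps commuting with $\epsilon$, $\Delta$, $\ast$ — so it defines a functor from the category of simplices of $X$ into $U(\mathcal M)$-coalgebras lying over the diagram computing $N_\bullet(X)$. Because $U(\mathcal M)$-coalgebra structures are given by a fixed collection of structure maps out of tensor powers and tensor products commute with filtered colimits, the colimit inherits a canonical $U(\mathcal M)$-coalgebra structure, functorial in $X$; this is the Kan-extension step. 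Dualizing via $\Hom(-,R)$ turns a $U(\mathcal M)$-coalgebra into a $U(\mathcal M)$-algebra (as remarked just before Definition~\ref{def: Steenrod products} and again in the subsection on algebras/coalgebras), giving the operad morphism $\phi^\triangle \colon U(\mathcal M) \to \End_{N^\bullet(X)}$.

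The main obstacle is the honest verification of relation (ii), that $\ast$ together with $\Delta$ realizes the relation $\partial\,\product = \boundary$ in $\mathcal M$: one must check on a general simplex that the boundary of $[v_0,\dots,v_p]\ast[v_{p+1},\dots,v_q]$ equals the prescribed signed combination involving $\partial$ applied to each tensor factor and the shuffle/reorder sign $(-1)^{p+|\pi|}$, including correctly tracking the vanishing case when vertices repeat. This is a bookkeeping computation with Koszul signs that is genuinely the technical heart; everything else (coassociativity, counitality, the colimit/Kan-extension formalism, dualization) is either standard or formal. Since the statement is quoted from \cite{medina2020prop1}, I would defer that sign computation to the cited reference and present the proof here as an assembly of the bialgebra result with the colimit argument.
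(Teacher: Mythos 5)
The paper offers no proof of this proposition---it is quoted directly from \cite{medina2020prop1}---and your outline (check the defining relations of $\mathcal M$ on each $N_\bullet(\triangle^n)$, Kan-extend the resulting natural coalgebra structure along the colimit $X \cong \colim_{\triangle^n \to X} \triangle^n$, then dualize to obtain $\phi^\triangle$) is exactly the strategy of that reference and of the surrounding discussion in this paper. Deferring the Koszul-sign verification that $\ast$ realizes $\partial\,\product = \boundary$ to the cited source is appropriate, so your proposal is correct and takes essentially the same approach.
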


Composing the algebra structure $\phi^\triangle$ with the May--Steenrod structure on $U(\mathcal M)$ gives a natural May--Steenrod structure on $N^\bullet(X)$.
We record this observation in the following.

\begin{theorem}
	The commutative diagram
	\begin{equation*}
	\begin{tikzcd}[column sep = small, row sep = small]
	&[18pt] U(\mathcal M) \arrow[dr, "\phi^\triangle", out=0] &[-0pt] \\
	\mathcal W \arrow[ur, "\psi_{U(\mathcal M)}", in=180] \arrow[rr, "\psi^\triangle"] & & \mathrm{End}_{N^\bullet (X)}
	\end{tikzcd}
	\end{equation*}
	defines a natural May--Steenrod structure on $N^\bullet(X)$ for any simplicial set $X$.
\end{theorem}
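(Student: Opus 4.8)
The plan is to observe that the statement is a formal consequence of the two ingredients already established: Theorem~\ref{thm: Steenrod-Adem on U(M)}, which furnishes a May--Steenrod structure $\psi_{U(\mathcal M)} \colon \mathcal W \to U(\mathcal M)$ together with a factorization $\psi_{U(\mathcal M)} = \phi_{U(\mathcal M)} \circ \iota$ through an $E_\infty$-operad $\mathcal R$ with $\iota$ a quasi-isomorphism and $\phi_{U(\mathcal M)}$ an operad map; and Proposition~\ref{prop: simplicial chain bialgebra}, which gives a natural operad morphism $\phi^\triangle \colon U(\mathcal M) \to \End_{N^\bullet(X)}$. First I would set $\psi^\triangle \defeq \phi^\triangle \circ \psi_{U(\mathcal M)}$; since both $\phi^\triangle$ and $\psi_{U(\mathcal M)}$ are morphisms of $\mathrm C$-modules (the former being an operad map, hence in particular $\mathrm S$-equivariant, hence $\mathrm C$-equivariant), the composite $\psi^\triangle$ is a morphism of $\mathrm C$-modules, which makes the displayed triangle commute by definition.

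Next I would exhibit the required factorization through an $E_\infty$-operad. The natural candidate is the same resolution $\mathcal R$ appearing in the May--Steenrod structure on $U(\mathcal M)$: one takes the composite
\begin{equation*}
\mathcal W \xra{\iota} \mathcal R \xra{\phi^\triangle \circ \phi_{U(\mathcal M)}} \End_{N^\bullet(X)}.
\end{equation*}
Here $\iota$ is a quasi-isomorphism of $\mathrm C$-modules by hypothesis, and $\phi^\triangle \circ \phi_{U(\mathcal M)}$ is a composite of operad morphisms, hence an operad morphism. That this composite agrees with $\psi^\triangle$ is immediate from $\psi_{U(\mathcal M)} = \phi_{U(\mathcal M)} \circ \iota$. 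Thus all the conditions in Definition~\ref{def: May--Steenrod structure} are verified, and the structure is natural in $X$ because each of $\psi_{U(\mathcal M)}$ (which does not involve $X$) and $\phi^\triangle$ (natural by Proposition~\ref{prop: simplicial chain bialgebra}) is natural.

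Honestly, there is no real obstacle here: the theorem is a packaging statement, and the only thing to be careful about is the bookkeeping of which maps are $\mathrm C$-equivariant versus $\mathrm S$-equivariant and whether the $E_\infty$-operad witnessing the factorization can be reused (it can, since $\End_{N^\bullet(X)}$ receives a map from it by postcomposition). If one wanted the \emph{comultiplicative} variant discussed after Definition~\ref{def: Steenrod products}, slightly more care would be needed: one would have to check that $\phi^\triangle$ and the factorization maps are morphisms of coalgebras with respect to the coproduct $\Delta$ — but as stated, the theorem only asserts an ordinary May--Steenrod structure, so this subtlety does not arise. I would therefore keep the proof to essentially two sentences: define $\psi^\triangle$ as the stated composite, and note that precomposing the factorization $\phi_{U(\mathcal M)} \circ \iota$ of Theorem~\ref{thm: Steenrod-Adem on U(M)} with nothing and postcomposing with $\phi^\triangle$ yields the required factorization through $\mathcal R$, with naturality inherited from Proposition~\ref{prop: simplicial chain bialgebra}.
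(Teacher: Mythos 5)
Your proposal is correct and matches the paper, which offers no proof beyond the remark that composing $\phi^\triangle$ with $\psi_{U(\mathcal M)}$ gives the structure; your unpacking of why postcomposition preserves the factorization condition is exactly the intended (implicit) argument. The only simplification available is that one may take the witnessing $E_\infty$-operad to be $U(\mathcal M)$ itself, with $\iota = \psi_{U(\mathcal M)}$ (a quasi-isomorphism by Theorem~\ref{thm: Steenrod-Adem on U(M)}) and $\phi = \phi^\triangle$, so the displayed triangle is literally the required factorization.
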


\begin{remark}
	The $E_\infty$-structure we described in Proposition~\ref{prop: simplicial chain bialgebra}, depending solely on three fundamental maps, generalizes the coalgebra structures of McClure--Smith \cite{mcclure2003multivariable} and Berger--Fresse \cite{berger2004combinatorial}, please consult \cite{medina2020prop1} for more details.
\end{remark}

We will now give examples of how this May--Steenrod structure defines representatives of Steenrod operations for simplicial cochains.
For applications related to the cohomology of spaces, it is convenient to introduce the notation $P^s = P_{-s}$ and $\beta P^s = \beta P_{-s}$ for Steenrod operations.

\begin{example}
	Let us consider the prime $2$.
	The value $P^1(x)\big([0,1,2,3,4]\big)$ for a homogeneous cocycle $x$ in $N^{-3}(\triangle^4)$ is equal to the value of $x^{\otimes 2}$ acting on
	\begin{align*}
	[0, 1, 2, 3] \otimes [0, 1, 3, 4] \ +\ &
	[0, 2, 3, 4] \otimes [0, 1, 2, 4] \\ \ +\
	[0, 1, 2, 3] \otimes [1, 2, 3, 4] \ +\ &
	[0, 1, 3, 4] \otimes [1, 2, 3, 4].
	\end{align*}
	Similarly, the value of $P^2(y)\big([0,1,2,3,4,5,6,7]\big)$ for a homogeneous cocycle $y$ in $N^{-5}(\triangle^7)$ is equal to the value of $y^{\otimes 2}$ acting on
	\begin{align*}
	[0, 1, 2, 5, 6, 7] \otimes [0, 1, 2, 3, 4, 5] & \ +\
	[0, 1, 2, 3, 6, 7] \otimes [0, 1, 3, 4, 5, 6] \\ \ +\
	[0, 1, 2, 3, 4, 7] \otimes [0, 1, 4, 5, 6, 7] & \ +\
	[0, 2, 3, 5, 6, 7] \otimes [0, 1, 2, 3, 4, 5] \\ \ +\
	[0, 2, 3, 4, 6, 7] \otimes [0, 1, 2, 4, 5, 6] & \ +\
	[0, 2, 3, 4, 5, 7] \otimes [0, 1, 2, 5, 6, 7] \\ \ +\
	[0, 3, 4, 5, 6, 7] \otimes [0, 1, 2, 3, 4, 5] & \ +\
	[0, 3, 4, 5, 6, 7] \otimes [0, 1, 2, 3, 5, 6] \\ \ +\
	[0, 3, 4, 5, 6, 7] \otimes [0, 1, 2, 3, 6, 7] & \ +\
	[0, 1, 2, 3, 6, 7] \otimes [1, 2, 3, 4, 5, 6] \\ \ +\
	[0, 1, 2, 3, 4, 7] \otimes [1, 2, 4, 5, 6, 7] & \ +\
	[0, 1, 3, 4, 6, 7] \otimes [1, 2, 3, 4, 5, 6] \\ \ +\
	[0, 1, 3, 4, 5, 7] \otimes [1, 2, 3, 5, 6, 7] & \ +\
	[0, 1, 4, 5, 6, 7] \otimes [1, 2, 3, 4, 5, 6] \\ \ +\
	[0, 1, 4, 5, 6, 7] \otimes [1, 2, 3, 4, 6, 7] & \ +\
	[0, 1, 2, 3, 4, 7] \otimes [2, 3, 4, 5, 6, 7] \\ \ +\
	[0, 1, 2, 4, 5, 7] \otimes [2, 3, 4, 5, 6, 7] & \ +\
	[0, 1, 2, 5, 6, 7] \otimes [2, 3, 4, 5, 6, 7].
	\end{align*}
\end{example}

\begin{example}
	Let us consider the prime $3$.
	The value $\beta P^1(x)\big([0,1,2,3,4,5,6,7,8]\big)$ for a homogeneous cocycle $x$ in $N^{-3}(\triangle^8)$ is equal to the value of $x^{\otimes 3}$ acting on
	\begin{align*}
	& \,-\, [0, 6, 7, 8] \otimes [0, 1, 2, 3] \otimes [3, 4, 5, 6] \,+\, [0, 1, 7, 8] \otimes [1, 2, 3, 4] \otimes [4, 5, 6, 7] \\ & \,-\, [0, 1, 2, 8] \otimes [2, 3, 4, 5] \otimes [5, 6, 7, 8].
	\end{align*}
	Similarly, the value of $P^1(y)\big([0,1,\dots,7]\big)$ for a homogeneous cocycle $y$ in $N^{-3}(\triangle^7)$ is equal to the value of $y^{\otimes 3}$ acting on
	\begin{align*}
	& \,-\, [0,3,4,5] \otimes [0,5,6,7] \otimes [0,1,2,3] \,-\, [0,4,5,6] \otimes [0,1,6,7] \otimes [1,2,3,4] \\ & \,-\, [0,5,6,7] \otimes [0,1,2,7] \otimes [2,3,4,5] \,-\, [0,1,4,5] \otimes [1,5,6,7] \otimes [1,2,3,4] \\ & \,+\, [0,1,5,6] \otimes [1,2,6,7] \otimes [2,3,4,5] \,-\, [0,1,6,7] \otimes [1,2,3,7] \otimes [3,4,5,6] \\ & \,-\, [0,1,2,5] \otimes [2,5,6,7] \otimes [2,3,4,5] \,-\, [0,1,2,6] \otimes [2,3,6,7] \otimes [3,4,5,6] \\ & \,-\, [0,1,2,7] \otimes [2,3,4,7] \otimes [4,5,6,7] \,+\, [0,1,2,3] \otimes [3,4,5,6] \otimes [0,1,6,7] \\ & \,+\, [0,2,3,4] \otimes [4,5,6,7] \otimes [0,1,2,7] \,+\, [0,1,2,3] \otimes [3,4,5,6] \otimes [1,2,6,7] \\ & \,-\, [0,1,3,4] \otimes [4,5,6,7] \otimes [1,2,3,7] \,+\, [0,1,2,3] \otimes [3,4,5,6] \otimes [2,3,6,7] \\ & \,+\, [0,1,2,4] \otimes [4,5,6,7] \otimes [2,3,4,7] \,+\, [0,1,2,3] \otimes [3,4,5,6] \otimes [3,4,6,7] \\ & \,-\, [0,1,2,3] \otimes [3,5,6,7] \otimes [3,4,5,7] \,+\, [0,1,2,3] \otimes [3,4,5,6] \otimes [4,5,6,7] \\ & \,+\, [0,1,2,3] \otimes [3,4,6,7] \otimes [4,5,6,7].
	\end{align*}
\end{example}

\subsection{Cochains of cubical sets}

In this subsection we introduce, closely following the presentation of the previous subsection, a natural May--Steenrod structure on the normalized cochains of any cubical set.
By the same considerations, the desired construction will follow from a natural $\mathcal M$-bialgebra structure on $N_\bullet(\square^n)$.
These are determined by three linear maps satisfying the relations in the presentation of $\mathcal M$.
For $n \in \mathbb{N}$, define the following: \vspace*{5pt} \\
(1) Define the counit $\epsilon \in \Hom(N_\bullet(\square^n), \Z)$ by
\begin{equation*}
\epsilon \left( x_1 \otimes \cdots \otimes x_d \right) = \epsilon(x_1) \cdots \, \epsilon(x_n),
\end{equation*}
where
\begin{equation*}
\epsilon([0]) = \epsilon([1]) = 1, \qquad \epsilon([0, 1]) = 0.
\end{equation*} \vspace*{-6pt} \\
(2) Define the coproduct $\Delta \in \Hom \left( N_\bullet(\square^n), N_\bullet(\square^n)^{\otimes 2} \right)$ by
\begin{equation*}
\Delta (x_1 \otimes \cdots \otimes x_n) =
\sum \pm \left( x_1^{(1)} \otimes \cdots \otimes x_n^{(1)} \right) \otimes
\left( x_1^{(2)} \otimes \cdots \otimes x_n^{(2)} \right),
\end{equation*}
where the sign is determined using the Koszul convention, and we are using Sweedler's notation
\begin{equation*}
\Delta(x_i) = \sum x_i^{(1)} \otimes x_i^{(2)}
\end{equation*}
for the chain map $\Delta \colon N_\bullet(\square^1) \to N_\bullet(\square^1)^{\otimes 2}$ defined by
\begin{equation*}
\Delta([0]) = [0] \otimes [0], \quad \Delta([1]) = [1] \otimes [1], \quad \Delta([0, 1]) = [0] \otimes [0, 1] + [0, 1] \otimes [1].
\end{equation*}
By using that $N_\bullet(\square^n) = N_\bullet(\square^1)^{\otimes n}$, $\Delta$ is the composition
\begin{equation*}
\begin{tikzcd}
N_\bullet(\square^1)^{\otimes n} \arrow[r, "\Delta^{\otimes n}"] & \left( N_\bullet(\square^1)^{\otimes 2} \right)^{\otimes n} \arrow[r, "sh"] & \left( N_\bullet(\square^1)^{\otimes n} \right)^{\otimes 2}
\end{tikzcd}
\end{equation*}
where $sh$ is the shuffle map that places tensor factors in odd position first. \vspace*{5pt} \\
(3) Define the product $\ast \in \Hom(N_\bullet(\square^n)^{\otimes 2}, N_\bullet(\square^n))$ by
\begin{align*}
(x_1 \otimes \cdots \otimes x_n) \ast (y_1 \otimes \cdots \otimes y_n) =
(-1)^{|x|} \sum_{i=1}^n x_{<i} \epsilon(y_{<i}) \otimes x_i \ast y_i \otimes \epsilon(x_{>i})y_{>i},
\end{align*}
where
\begin{align*}
x_{<i} & = x_1 \otimes \cdots \otimes x_{i-1}, &
y_{<i} & = y_1 \otimes \cdots \otimes y_{i-1}, \\
x_{>i} & = x_{i+1} \otimes \cdots \otimes x_n, &
y_{>i} & = y_{i+1} \otimes \cdots \otimes y_n,
\end{align*}
with the convention
\begin{equation*}
x_{<1} = y_{<1} = x_{>n} = y_{>n} = 1 \in \Z,
\end{equation*}
and the only non-zero values of $x_i \ast y_i$ are
\begin{equation*}
\ast([0] \otimes [1]) = [0, 1], \qquad \ast([1] \otimes [0]) = -[0, 1].
\end{equation*}

\begin{proposition}[\cite{medina2021cubical}] \label{prop: cubical chain bialgebra}
	For every $n \in \mathbb{N}$, the assignment
	\begin{equation*}
	\counit \mapsto \epsilon, \quad \coproduct \mapsto \Delta, \quad \product \mapsto \ast,
	\end{equation*}
	defines a natural $\mathcal M$-bialgebra structure on $N_\bullet(\square^n)$, and, via a Kan extension argument, a natural $U(\mathcal M)$-algebra structure $\phi^\square \colon U(\mathcal M) \to \End_{N^\bullet (X)}$ on the cochains of any cubical set $X$.
\end{proposition}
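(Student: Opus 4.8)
The plan is to follow the pattern of Proposition~\ref{prop: simplicial chain bialgebra}: first equip each $N_\bullet(\square^n)$ with an $\mathcal M$-bialgebra structure, then extend to an arbitrary cubical set by a left Kan extension argument. Since $\mathcal M$ is presented by the generators $\counit$, $\coproduct$, $\product$ subject only to the boundary constraints $\partial\,\counit = 0$, $\partial\,\coproduct = 0$, $\partial\,\product = \boundary$ and the relations $\productcounit$, $\leftcounitality$, $\rightcounitality$, an $\mathcal M$-bialgebra structure on $A = N_\bullet(\square^n)$ is precisely a choice of $\epsilon \in \Hom(A, \Z)$ and $\Delta \in \Hom(A, A^{\otimes 2})$ of degree $0$ together with $\ast \in \Hom(A^{\otimes 2}, A)$ of degree $1$ subject to
\begin{align*}
& \partial\epsilon = 0, \qquad \partial\Delta = 0, \qquad \epsilon \circ \ast = 0, \qquad (\epsilon \otimes \id)\circ\Delta = \id = (\id \otimes \epsilon)\circ\Delta, \\
& \partial\ast = (\epsilon \otimes \id) - (\id \otimes \epsilon).
\end{align*}
So the whole first half of the proof reduces to checking these five identities for the three maps defined above.

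To do this I would exploit the factorization $N_\bullet(\square^n) = N_\bullet(\square^1)^{\otimes n}$, under which $\epsilon = \epsilon^{\otimes n}$, $\Delta = sh \circ \Delta^{\otimes n}$, and $\ast$ is the displayed Leibniz-type formula. The verification then splits in two. First, a finite computation on the rank-three complex $N_\bullet(\square^1) = R\{[0], [1], [0,1]\}$ settles the five identities there by hand, using $\partial[0,1] = [1] - [0]$, the given coproduct, and the two nonzero values $[0]\ast[1] = [0,1]$, $[1]\ast[0] = -[0,1]$; one notes in passing that on $N_\bullet(\square^0) = R$ the defining sum for $\ast$ is empty, so $\ast$ vanishes there. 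Second, one shows that the tensor product of two $\mathcal M$-bialgebras $A$, $B$, endowed with $\epsilon_A \otimes \epsilon_B$, $sh \circ (\Delta_A \otimes \Delta_B)$, and the Leibniz-type product $\ast_{A \otimes B}$, is again an $\mathcal M$-bialgebra; iterating this construction and identifying its structure maps recovers exactly the formulas of the statement on $N_\bullet(\square^1)^{\otimes n}$. This closure of $\mathcal M$-bialgebras under tensor products is the structural property of the prop $\mathcal M$ that makes the cubical construction possible and is what \cite{medina2021cubical} (building on \cite{medina2020prop1}) provides. I expect the one genuinely delicate point to be the identity $\partial\ast_{A \otimes B} = (\epsilon \otimes \id) - (\id \otimes \epsilon)$: one must reconcile the $(-1)^{|x|}$ prefactor, the Koszul signs internal to the sum over $i$, and the differentials landing on the $\epsilon(y_{<i})$ and $\epsilon(x_{>i})$ factors, using throughout that $\epsilon$ is supported in degree $0$ to discard the terms that would otherwise be misgraded.

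Finally, to pass to the cochains of an arbitrary cubical set I would record that the assignment $2^n \mapsto N_\bullet(\square^n)$, together with $\epsilon$, $\Delta$, $\ast$, is natural over the cube category $\square$: for each generating operator — a coface $\delta^\varepsilon$ or the codegeneracy $\sigma$, tensored with identities — the induced chain map is a morphism of $\mathcal M$-bialgebras. On the factors $N_\bullet(\square^1)$ and $N_\bullet(\square^0) = R$ this is a finite check (compatibility with $\ast$ uses that $\ast$ vanishes on $N_\bullet(\square^0)$ and that $[\varepsilon]\ast[\varepsilon] = 0$), and for operators of the form $\id^{\otimes j}\otimes f \otimes \id^{\otimes k}$ it follows from the functoriality of the tensor-product construction. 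Thus $2^n \mapsto N_\bullet(\square^n)$ is a functor from $\square$ to $\mathcal M$-bialgebras; restricting the prop action to $U(\mathcal M) = U_2(\mathcal M)$ yields a natural family of $U(\mathcal M)$-coalgebra structures on the $N_\bullet(\square^n)$. Because $N_\bullet = \colim_{\square^n \to X} N_\bullet(\square^n)$ presents $N_\bullet$ as the left Kan extension of its restriction to representables along $\yoneda\colon\square \to \cSet$, its universal property upgrades every operation $N_\bullet(\square^n) \to N_\bullet(\square^n)^{\otimes r}$ to a unique operation $N_\bullet(X) \to N_\bullet(X)^{\otimes r}$ natural in $X$, and the operad axioms — being equalities of natural transformations out of $N_\bullet$ — hold over all of $\cSet$ since they hold on representables. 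Composing with linear duality converts this $U(\mathcal M)$-coalgebra structure on $N_\bullet(X)$ into the asserted natural $U(\mathcal M)$-algebra structure $\phi^\square\colon U(\mathcal M) \to \End_{N^\bullet(X)}$.
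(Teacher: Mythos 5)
Your outline is sound and is essentially the argument of the cited reference: the paper itself gives no proof of this proposition, importing it wholesale from \cite{medina2021cubical}, and that source proceeds exactly as you propose --- translate the presentation of $\mathcal M$ into the five identities, verify them on the three-dimensional complex $N_\bullet(\square^1)$, establish that the tensor product of $\mathcal M$-bialgebras equipped with $\epsilon_A\otimes\epsilon_B$, $sh\circ(\Delta_A\otimes\Delta_B)$ and the Leibniz-type product is again an $\mathcal M$-bialgebra, check naturality on the generating cofaces and codegeneracies, and conclude by the colimit description of $N_\bullet$ together with linear duality. The only caveat is that you leave the one genuinely laborious step --- the sign bookkeeping in $\partial\ast_{A\otimes B}=(\epsilon\otimes\id)-(\id\otimes\epsilon)$ --- unexecuted, though you correctly identify it as the delicate point and the tools needed to carry it out.
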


Composing the algebra structure $\phi^\square$ with the May--Steenrod structure on $U(\mathcal M)$ gives a natural May--Steenrod structure on $N^\bullet(X)$.
We record this observation in the following theorem.

\begin{theorem}
	The commutative diagram
	\begin{equation*}
	\begin{tikzcd}[column sep = small, row sep = small]
	&[18pt] U(\mathcal M) \arrow[dr, "\phi^\square", out=0] &[-0pt] \\
	\mathcal W \arrow[ur, "\psi_{U(\mathcal M)}", in=180] \arrow[rr, "\psi^\square"] & & \End_{N^\bullet (X)}
	\end{tikzcd}
	\end{equation*}
	defines a natural May--Steenrod structure on $N^\bullet(X)$ for any cubical set $X$.
\end{theorem}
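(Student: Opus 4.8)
The plan is to reduce the statement to the purely formal observation that a May--Steenrod structure pushes forward along any morphism of operads, and then to quote the naturality of $\phi^\square$ already recorded in Proposition~\ref{prop: cubical chain bialgebra}. First I would recall that $\psi^\square$ is by construction the composite $\phi^\square \circ \psi_{U(\mathcal M)}$ of the May--Steenrod structure $\psi_{U(\mathcal M)} \colon \mathcal W \to U(\mathcal M)$ of Theorem~\ref{thm: Steenrod-Adem on U(M)} with the $U(\mathcal M)$-algebra structure $\phi^\square \colon U(\mathcal M) \to \End_{N^\bullet(X)}$ of Proposition~\ref{prop: cubical chain bialgebra}; viewing $\phi^\square$ through the forgetful passage from $\mathrm S$-modules to $\mathrm C$-modules recalled after Definition~\ref{def: e-infinity operad and prop}, the composite $\psi^\square$ is a morphism of $\mathrm C$-modules, so by Definition~\ref{def: May--Steenrod structure} it remains only to produce a suitable factorization.

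Next I would exhibit the factorization by taking $\mathcal R = U(\mathcal M)$ itself, which is an $E_\infty$-operad by \cite[Theorem~3.3]{medina2020prop1}, together with $\iota = \psi_{U(\mathcal M)}$ and $\phi = \phi^\square$. Here $\psi_{U(\mathcal M)} = SL \circ \psi_{\mathcal X}$ is a quasi-isomorphism of $\mathrm C$-modules: $\psi_{\mathcal X}$ is one by Theorem~\ref{thm: Steenrod-Adem on surjection MS convention}, and $SL$ sends the cycle $(1,\dots,r)$ representing a generator of $H_0\bigl(\mathcal X(r)\bigr)$ to a cycle representing a generator of $H_0\bigl(U(\mathcal M)(r)\bigr)$, both complexes having the homology of a point. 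Since $\phi^\square$ is a morphism of operads, the triangle
\[
\mathcal W \xra{\ \psi_{U(\mathcal M)}\ } U(\mathcal M) \xra{\ \phi^\square\ } \End_{N^\bullet(X)}
\]
commutes with composite $\psi^\square$, which is exactly a factorization of the kind demanded by Definition~\ref{def: May--Steenrod structure}. (Equivalently, one may instead take the $E_\infty$-operad $\mathcal R$ and the maps $\iota, \phi$ witnessing that $\psi_{U(\mathcal M)}$ is a May--Steenrod structure and post-compose $\phi$ with $\phi^\square$; this variant does not even require knowing that $\psi_{U(\mathcal M)}$ is itself a quasi-isomorphism.)

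Finally I would address naturality. Since $\End_{N^\bullet(X)}$ is not functorial in $X$, the assertion must be read as: for every cubical map $f \colon X \to Y$ the induced cochain map $f^\ast \colon N^\bullet(Y) \to N^\bullet(X)$ intertwines the multioperations determined by each element of $\mathcal W$ on source and target. This is precisely the content of $\phi^\square$ being a \emph{natural} $U(\mathcal M)$-algebra structure, which Proposition~\ref{prop: cubical chain bialgebra} obtains from the Kan extension along the cubes $\square^n$; as $\psi_{U(\mathcal M)}$ is independent of $X$, the composite $\psi^\square = \phi^\square \circ \psi_{U(\mathcal M)}$ inherits this naturality. I do not expect a genuine obstacle: the only points demanding a little care are the consistent use of the $\mathrm S$-to-$\mathrm C$ forgetful functor so that ``morphism of operads'' and ``morphism of $\mathrm C$-modules'' compose as claimed, and the correct interpretation of ``natural'' just discussed. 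The identical reasoning proves the analogous statement for the cochains of simplicial sets.
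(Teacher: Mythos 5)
Your proposal is correct and follows the same route as the paper, which states this theorem as a direct observation: the composite $\phi^\square \circ \psi_{U(\mathcal M)}$ is a morphism of $\mathrm C$-modules admitting the factorization required by Definition~\ref{def: May--Steenrod structure} with $\mathcal R = U(\mathcal M)$, $\iota = \psi_{U(\mathcal M)}$, $\phi = \phi^\square$, and naturality is inherited from Proposition~\ref{prop: cubical chain bialgebra}. You merely spell out the details the paper leaves implicit, including the harmless alternative factorization obtained by post-composing the one witnessing Theorem~\ref{thm: Steenrod-Adem on U(M)}.
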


We will now give examples of how this May--Steenrod structure defines representatives of Steenrod operations for cubical cochains.
Recall the notation $P^s = P_{-s}$ and $\beta P^s = \beta P_{-s}$ for Steenrod operations used when studying the cohomology of spaces.

\begin{example}
	Let us consider the prime $2$.
	The value $P^1(x)\big([01]^{4}\big)$ for a homogeneous cocycle $x$ in $N^{-3}(\square^4)$ is equal to the value of $x^{\otimes 2}$ acting on
	\begin{align*}&
	[01]1[01][01] \otimes [01][01]0[01]\,+\,
	[01][01][01]0 \otimes [01]0[01][01]\,+\, \\&
	[01][01][01]0 \otimes 1[01][01][01]\,+\,
	[01][01]1[01] \otimes [01]0[01][01]\,+\, \\&
	[01][01]1[01] \otimes 1[01][01][01]\,+\,
	[01][01][01]0 \otimes [01][01]1[01]\,+\, \\&
	[01][01]0[01] \otimes [01][01][01]1\,+\,
	[01]1[01][01] \otimes [01][01][01]1\,+\, \\&
	0[01][01][01] \otimes [01][01][01]1\,+\,
	0[01][01][01] \otimes [01][01]0[01]\,+\, \\&
	0[01][01][01] \otimes [01]1[01][01]\,+\,
	[01]0[01][01] \otimes 1[01][01][01].\,\phantom{+}\,
	\end{align*}
\end{example}

\begin{example}
	Let us consider the prime $3$.
	The value of $\beta P^0(x)\big([01]^2\big)$ for a homogeneous cocycle $x$ in $N^{-1}(\square^2)$ is equal to the value of $x^{\otimes 3}$ acting on
	\begin{align*}
	&\,-\, [01]1 \otimes [01]0 \otimes 1[01] \,-\, 0[01] \otimes [01]0 \otimes 1[01] \\
	&\,+\, [01]1 \otimes 0[01] \otimes [01]1 + 0[01] \otimes 0[01] \otimes [01]1.
	\end{align*}
\end{example}

\section{Outlook} \label{s:outlook}

This article looked at Steenrod operations from an algebraic viewpoint, a subject with rich geometric and combinatorial components as well.
For example, \cite{Postnikov} and \cite{medina2018prop2} independently introduced equivalent geometric representations of the cup-$i$ products in terms of stabilized arc surfaces \cite{KLP} and weighted ribbon graphs respectively.
In fact, an entire $E_\infty$-operad (prop) is constructed geometrically in this way.
We can also interpret the May--Steenrod structure in $U(\mathcal M)$ from the ``oriented surface" perspective, where the norm map $N$ of cyclic groups --~a key ingredient in Definition \ref{def: minimal cyclic resolution}~-- was identified in \cite{KLP} with a Dehn twist operator in connection with Connes' cyclic complex.

There is a functorial approach to the theory using the formalism of Feynman categories \cite{feynman}.
This includes cyclic, planar cyclic as well as Berger's pre-operads \cite{BergerRecog}.
Their interplay is of independent interest \cite{BergerKaufmann, feyrep} and will be linked directly to the constructions of this paper.

In higher category theory, the paper \cite{medina2020globular} constructs a functor producing strict $\infty$-categories from group-like cup-$i$ coalgebras in a manner similar to \cite{steiner2004omega}.
In particular, the cup-$i$ constructions described in this article for standard simplices and cubes define, respectively, the Street nerve and cubical nerve of strict $\infty$-categories.
We anticipate that the more general Steenrod cup-$(p,i)$ products constructed in this work will also have deep combinatorial interpretations.

In physics, Gaiotto, Kapustin, Thorngren \cite{gaiotto2016spin, kapustin2017fermionic, bhardwaj2017fermionic} and others have considered cellular decompositions of spacetime together with fields represented by cellular cochains.
In order to express subtle interactions between these fields, they have used cup-$i$ products to define relevant action functionals for topological field theories.
We expect that new topological field theories of interest can be studied using the Steenrod cup-$(p,i)$ products introduced in this work.

In this article we have not focused on the operations that exist non-trivially for $E_n$-algebras with $n$ finite, see part III of \cite{may76homology}.
A treatment close to ours for the $E_2$ case was given in \cite{tourtchine2006cohen}.
Effective constructions for these Dyer--Lashof--Cohen operations should be related to the geometry and combinatorial structure of configuration spaces \cite{KZhang, sinha2013littledisks, berger2004combinatorial, ayala2014configuration} and higher categories \cite{Bathigher, BalFiedSchwVogt, Rezkhigher}, and will appear elsewhere.

	\printbibliography

@book{adams1995stable,
	AUTHOR = {Adams, J. F.},
	TITLE = {Stable homotopy and generalised homology},
	SERIES = {Chicago Lectures in Mathematics},
	NOTE = {Reprint of the 1974 original},
	PUBLISHER = {University of Chicago Press, Chicago, IL},
	YEAR = {1995},
	PAGES = {x+373},
	ISBN = {0-226-00524-0},
	MRCLASS = {55-02 (57-02)},
	MRNUMBER = {1324104},
}

@book{benson1998rep,
	AUTHOR = {Benson, D. J.},
	TITLE = {Representations and cohomology. {II}},
	SERIES = {Cambridge Studies in Advanced Mathematics},
	VOLUME = {31},
	EDITION = {Second},
	NOTE = {Cohomology of groups and modules},
	PUBLISHER = {Cambridge University Press, Cambridge},
	YEAR = {1998},
	PAGES = {xii+279},
	ISBN = {0-521-63652-3},
	MRCLASS = {20-02 (19-02 20Cxx 20Jxx 55-02)},
	MRNUMBER = {1634407},
}

@book{jacobson1989algebra,
	AUTHOR = {Jacobson, Nathan},
	TITLE = {Basic algebra. {II}},
	EDITION = {Second},
	PUBLISHER = {W. H. Freeman and Company, New York},
	YEAR = {1989},
	PAGES = {xviii+686},
	ISBN = {0-7167-1933-9},
	MRCLASS = {00A05 (12-01 15-01 16-01)},
	MRNUMBER = {1009787},
	MRREVIEWER = {P. M. Cohn},
}

@article{BergerKaufmann,
	AUTHOR = {Berger, Clemens and Kaufmann, Ralph M.},
	TITLE = {Comprehensive factorisation systems},
	JOURNAL = {Tbilisi Math. J.},
	FJOURNAL = {Tbilisi Mathematical Journal},
	VOLUME = {10},
	YEAR = {2017},
	NUMBER = {3},
	PAGES = {255--277},
	ISSN = {1875-158X},
	MRCLASS = {18A25 (12F10 18A32 18D50)},
	MRNUMBER = {3742580},
	MRREVIEWER = {Seyed Naser Hosseini},
	DOI = {10.1515/tmj-2017-0112},
	URL = {https://doi-org.ezproxy.lib.purdue.edu/10.1515/tmj-2017-0112},
}

@article{feynman,
	AUTHOR = {Kaufmann, Ralph M. and Ward, Benjamin C.},
	TITLE = {Feynman categories},
	JOURNAL = {Ast\'{e}risque},
	FJOURNAL = {Ast\'{e}risque},
	NUMBER = {387},
	YEAR = {2017},
	PAGES = {vii+161},
	ISSN = {0303-1179},
	ISBN = {978-2-85626-852-7},
	MRCLASS = {18D50 (16T05 18D10 18D35 18G55 55P48 55U35 81R10)},
	MRNUMBER = {3636409},
	MRREVIEWER = {Beno\^{\i}t Fresse},
}

@article{GerstenhaberVoronov,
	AUTHOR = {Gerstenhaber, Murray and Voronov, Alexander A.},
	TITLE = {Homotopy {$G$}-algebras and moduli space operad},
	JOURNAL = {Internat. Math. Res. Notices},
	FJOURNAL = {International Mathematics Research Notices},
	YEAR = {1995},
	NUMBER = {3},
	PAGES = {141--153},
	ISSN = {1073-7928},
	MRCLASS = {18C99 (18G99 57T30 81T99)},
	MRNUMBER = {1321701},
	MRREVIEWER = {J. Stasheff},
	DOI = {10.1155/S1073792895000110},
	URL = {https://doi-org.ezproxy.lib.purdue.edu/10.1155/S1073792895000110},
}

@article{KZhang,
	AUTHOR = {Kaufmann, Ralph M. and Zhang, Yongheng},
	TITLE = {Permutohedral structures on {$E_2$}-operads},
	JOURNAL = {Forum Math.},
	FJOURNAL = {Forum Mathematicum},
	VOLUME = {29},
	YEAR = {2017},
	NUMBER = {6},
	PAGES = {1371--1411},
	ISSN = {0933-7741},
	MRCLASS = {55P48 (52B12 55P35 55R80)},
	MRNUMBER = {3719307},
	MRREVIEWER = {Beno\^{\i}t Fresse},
	DOI = {10.1515/forum-2016-0052},
	URL = {https://doi-org.ezproxy.lib.purdue.edu/10.1515/forum-2016-0052},
}

@article{KLP,
	AUTHOR = {Kaufmann, Ralph M. and Livernet, Muriel and Penner, R. C.},
	TITLE = {Arc operads and arc algebras},
	JOURNAL = {Geom. Topol.},
	FJOURNAL = {Geometry and Topology},
	VOLUME = {7},
	YEAR = {2003},
	PAGES = {511--568},
	ISSN = {1465-3060},
	MRCLASS = {18D50 (17B99 32G15)},
	MRNUMBER = {2026541},
	MRREVIEWER = {Hossein Abbaspour},
	DOI = {10.2140/gt.2003.7.511},
	URL = {https://doi-org.ezproxy.lib.purdue.edu/10.2140/gt.2003.7.511},
}

@incollection {Postnikov,
	AUTHOR = {Kaufmann, Ralph M.},
	TITLE = {Dimension vs. genus: a surface realization of the little
	{$k$}-cubes and an {$E_\infty$} operad},
	BOOKTITLE = {Algebraic topology---old and new},
	SERIES = {Banach Center Publ.},
	VOLUME = {85},
	PAGES = {241--274},
	PUBLISHER = {Polish Acad. Sci. Inst. Math., Warsaw},
	YEAR = {2009},
	MRCLASS = {55P48 (18D50)},
	MRNUMBER = {2503531},
	MRREVIEWER = {Julia Bergner},
	DOI = {10.4064/bc85-0-17},
	URL = {https://doi-org.ezproxy.lib.purdue.edu/10.4064/bc85-0-17},
}

@incollection{feyrep,
	AUTHOR = {Kaufmann, Ralph M.},
	TITLE = {{F}eynman {C}ategories and {R}epresentation {T}heory},
	SERIES = {Contemp. Math.},
	PUBLISHER = {Amer. Math. Soc., Providence, RI},
	NOTE = {to appear. ArXiv 1911.10169},
}

@incollection{sinha2013littledisks,
	AUTHOR = {Sinha, Dev P.},
	TITLE = {The (non-equivariant) homology of the little disks operad},
	BOOKTITLE = {O{PERADS} 2009},
	SERIES = {S\'{e}min. Congr.},
	VOLUME = {26},
	PAGES = {253--279},
	PUBLISHER = {Soc. Math. France, Paris},
	YEAR = {2013},
	MRCLASS = {18D50 (55P48 55R80)},
	MRNUMBER = {3203375},
	MRREVIEWER = {Benjamin C. Ward},
}

@inproceedings{BergerRecog,
	AUTHOR = {Berger, Clemens},
	TITLE = {Combinatorial models for real configuration spaces and
	{$E_n$}-operads},
	BOOKTITLE = {Operads: {P}roceedings of {R}enaissance {C}onferences
	({H}artford, {CT}/{L}uminy, 1995)},
	SERIES = {Contemp. Math.},
	VOLUME = {202},
	PAGES = {37--52},
	PUBLISHER = {Amer. Math. Soc., Providence, RI},
	YEAR = {1997},
	MRCLASS = {18D35 (06A07 18B35 20B30 55P35 55U10)},
	MRNUMBER = {1436916},
	MRREVIEWER = {Peter J. Eccles},
	DOI = {10.1090/conm/202/02582},
	URL = {https://doi-org.ezproxy.lib.purdue.edu/10.1090/conm/202/02582},
}

@article {BalFiedSchwVogt,
	AUTHOR = {Balteanu, C. and Fiedorowicz, Z. and Schw\"{a}nzl, R. and Vogt,
	R.},
	TITLE = {Iterated monoidal categories},
	JOURNAL = {Adv. Math.},
	FJOURNAL = {Advances in Mathematics},
	VOLUME = {176},
	YEAR = {2003},
	NUMBER = {2},
	PAGES = {277--349},
	ISSN = {0001-8708},
	MRCLASS = {18D10 (55P35)},
	MRNUMBER = {1982884},
	MRREVIEWER = {Pilar C. Carrasco},
	DOI = {10.1016/S0001-8708(03)00065-3},
	URL = {https://doi-org.ezproxy.lib.purdue.edu/10.1016/S0001-8708(03)00065-3},
}

@article{Bathigher,
	Author = {Batanin, M. A.},
	Date-Added = {2019-11-18 19:22:49 -0500},
	Date-Modified = {2019-11-18 19:22:49 -0500},
	Doi = {10.1006/aima.1998.1724},
	Fjournal = {Advances in Mathematics},
	Issn = {0001-8708},
	Journal = {Adv. Math.},
	Mrclass = {18D05 (55P15)},
	Mrnumber = {1623672},
	Mrreviewer = {R. H. Street},
	Number = {1},
	Pages = {39--103},
	Title = {Monoidal globular categories as a natural environment for the theory of weak {$n$}-categories},
	Url = {https://doi-org.ezproxy.lib.purdue.edu/10.1006/aima.1998.1724},
	Volume = {136},
	Year = {1998},
	Bdsk-Url-1 = {https://doi-org.ezproxy.lib.purdue.edu/10.1006/aima.1998.1724},
	Bdsk-Url-2 = {https://doi.org/10.1006/aima.1998.1724}}

@article {Rezkhigher,
	AUTHOR = {Rezk, Charles},
	TITLE = {A {C}artesian presentation of weak {$n$}-categories},
	JOURNAL = {Geom. Topol.},
	FJOURNAL = {Geometry \& Topology},
	VOLUME = {14},
	YEAR = {2010},
	NUMBER = {1},
	PAGES = {521--571},
	ISSN = {1465-3060},
	MRCLASS = {18D05 (55U40)},
	MRNUMBER = {2578310},
	DOI = {10.2140/gt.2010.14.521},
	URL = {https://doi-org.ezproxy.lib.purdue.edu/10.2140/gt.2010.14.521},
}

@ARTICLE{medina2021adem,
	author={Brumfiel, Greg and {Medina-Mardones}, Anibal and Morgan, John},
	title={A cochain level proof of Adem relations in the mod 2 Steenrod algebra},
	JOURNAL = {J. Homotopy Relat. Struct.},
	FJOURNAL = {Journal of Homotopy and Related Structures},
	year={2021},
	day={19},
	issn={1512-2891},
	doi={10.1007/s40062-021-00287-3},
	url={https://doi.org/10.1007/s40062-021-00287-3}
}

@ARTICLE{medina2020prop1,
	AUTHOR = {{Medina-Mardones}, Anibal M.},
	TITLE = {A finitely presented {$E_\infty$}-prop {I}: algebraic context},
	JOURNAL = {High. Struct.},
	FJOURNAL = {Higher Structures},
	VOLUME = {4},
	YEAR = {2020},
	NUMBER = {2},
	PAGES = {1--21},
	MRCLASS = {55P48 (18M85 18N50)},
	MRNUMBER = {4133162},
	url = {https://journals.mq.edu.au/index.php/higher_structures/article/view/95}
}

@ARTICLE{medina2020globular,
	AUTHOR = {{Medina-Mardones}, Anibal M.},
	TITLE = {An algebraic representation of globular sets},
	JOURNAL = {Homology Homotopy Appl.},
	FJOURNAL = {Homology, Homotopy and Applications},
	VOLUME = {22},
	YEAR = {2020},
	NUMBER = {2},
	PAGES = {135--150},
	ISSN = {1532-0073},
	MRCLASS = {18N99 (18F99 55S05)},
	MRNUMBER = {4093174},
	DOI = {10.4310/hha.2020.v22.n2.a8},
	URL = {https://doi.org/10.4310/hha.2020.v22.n2.a8},
}

@ARTICLE{medina2020cartan,
	AUTHOR = {{Medina-Mardones}, Anibal M.},
	TITLE = {An effective proof of the {C}artan formula: the even prime},
	JOURNAL = {J. Pure Appl. Algebra},
	FJOURNAL = {Journal of Pure and Applied Algebra},
	VOLUME = {224},
	YEAR = {2020},
	NUMBER = {12},
	PAGES = {106444, 18},
	ISSN = {0022-4049},
	MRCLASS = {55S10 (55S05 55S12)},
	MRNUMBER = {4102178},
	DOI = {10.1016/j.jpaa.2020.106444},
	URL = {https://doi.org/10.1016/j.jpaa.2020.106444},
}

@ARTICLE{medina2021computer,
	author = {{Medina-Mardones}, Anibal M.},
	title = "{A computer algebra system for the study of commutativity up to coherent homotopies}",
	journal = {arXiv e-prints},
	year = 2021,
	archivePrefix = {arXiv},
	eprint = {2102.07670},
	note = {To appear in Tbilisi Math. J.}
}

@ARTICLE{medina2018prop2,
	author = {{Medina-Mardones}, Anibal M.},
	title = "{A finitely presented ${E}_{\infty}$-prop II: cellular context}",
	journal = {arXiv e-prints},
	year = 2018,
	archivePrefix = {arXiv},
	eprint = {1808.07132},
	note = {To appear in High. Struct.}
}

@ARTICLE{medina2021cobar,
	author = {{Medina-Mardones}, Anibal M. and {Rivera}, Manuel},
	title = "{The cobar construction as an $E_{\infty}$-bialgebra model of the based loop space}",
	journal = {arXiv e-prints},
	year = 2021,
	archivePrefix = {arXiv},
	eprint = {2108.02790},
}

@ARTICLE{medina2021cubical,
	author = {{Kaufmann}, Ralph M. and {Medina-Mardones}, Anibal M.},
	title = "{A combinatorial ${E}_\infty$-algebra structure on cubical cochains}",
	journal = {arXiv e-prints},
	year = 2021,
	archivePrefix = {arXiv},
	eprint = {2107.00669}
}

@ARTICLE{medina2021newformulas,
	author = {{Medina-Mardones}, Anibal M.},
	title = "{New formulas for cup-$i$ products and fast computation of Steenrod squares}",
	journal = {arXiv e-prints},
	year = 2021,
	archivePrefix = {arXiv},
	eprint = {2105.08025}
}

@ARTICLE{medina2018persistence,
	author = {{Medina-Mardones}, Anibal M.},
	title = "{Persistence Steenrod modules}",
	journal = {arXiv e-prints},
	year = 2018,
	archivePrefix = {arXiv},
	eprint = {1812.05031},
}

@book {adem2004milgram,
	AUTHOR = {Adem, Alejandro and Milgram, R. James},
	TITLE = {Cohomology of finite groups},
	SERIES = {Grundlehren der Mathematischen Wissenschaften [Fundamental
	Principles of Mathematical Sciences]},
	VOLUME = {309},
	EDITION = {Second},
	PUBLISHER = {Springer-Verlag, Berlin},
	YEAR = {2004},
	PAGES = {viii+324},
	ISBN = {3-540-20283-8},
	MRCLASS = {20J06 (18G99 55R35 55R40)},
	MRNUMBER = {2035696},
	DOI = {10.1007/978-3-662-06280-7},
	URL = {https://doi.org/10.1007/978-3-662-06280-7},
}

@book {boardman1973homotopy,
	AUTHOR = {Boardman, J. M. and Vogt, R. M.},
	TITLE = {Homotopy invariant algebraic structures on topological spaces},
	SERIES = {Lecture Notes in Mathematics, Vol. 347},
	PUBLISHER = {Springer-Verlag, Berlin-New York},
	YEAR = {1973},
	PAGES = {x+257},
	MRCLASS = {55D35},
	MRNUMBER = {0420609},
	MRREVIEWER = {J. Stasheff},
	url = {https://www.springer.com/gp/book/9783540064794}
}

@book {may1972geometry,
	AUTHOR = {May, J. P.},
	TITLE = {The geometry of iterated loop spaces},
	SERIES = {Lecture Notes in Mathematics, Vol. 271},
	PUBLISHER = {Springer-Verlag, Berlin-New York},
	YEAR = {1972},
	PAGES = {viii+175},
	MRCLASS = {55D35},
	MRNUMBER = {0420610},
	MRREVIEWER = {J. Stasheff},
	url = {https://doi.org/10.1007/BFb0067491}
}

@article {mcclure2003multivariable,
	AUTHOR = {McClure, James E. and Smith, Jeffrey H.},
	TITLE = {Multivariable cochain operations and little {$n$}-cubes},
	JOURNAL = {J. Amer. Math. Soc.},
	FJOURNAL = {Journal of the American Mathematical Society},
	VOLUME = {16},
	YEAR = {2003},
	NUMBER = {3},
	PAGES = {681--704},
	ISSN = {0894-0347},
	MRCLASS = {55P48 (18D50)},
	MRNUMBER = {1969208},
	MRREVIEWER = {Benoit Fresse},
	DOI = {10.1090/S0894-0347-03-00419-3},
	URL = {https://doi.org/10.1090/S0894-0347-03-00419-3},
}

@article {berger2004combinatorial,
	AUTHOR = {Berger, Clemens and Fresse, Benoit},
	TITLE = {Combinatorial operad actions on cochains},
	JOURNAL = {Math. Proc. Cambridge Philos. Soc.},
	FJOURNAL = {Mathematical Proceedings of the Cambridge Philosophical
	Society},
	VOLUME = {137},
	YEAR = {2004},
	NUMBER = {1},
	PAGES = {135--174},
	ISSN = {0305-0041},
	MRCLASS = {18D50 (16E45 55P48)},
	MRNUMBER = {2075046},
	MRREVIEWER = {David Chataur},
	DOI = {10.1017/S0305004103007138},
	URL = {https://doi.org/10.1017/S0305004103007138},
}

@article {chataur2005adem-cartan,
	AUTHOR = {Chataur, David and Livernet, Muriel},
	TITLE = {Adem-{C}artan operads},
	JOURNAL = {Comm. Algebra},
	FJOURNAL = {Communications in Algebra},
	VOLUME = {33},
	YEAR = {2005},
	NUMBER = {11},
	PAGES = {4337--4360},
	ISSN = {0092-7872},
	MRCLASS = {55S10 (18D50 55P48)},
	MRNUMBER = {2184004},
	MRREVIEWER = {Vincent Giambalvo},
	DOI = {10.1080/00927870500243205},
	URL = {https://doi.org/10.1080/00927870500243205},
}

@incollection {markl2008props,
	AUTHOR = {Markl, Martin},
	TITLE = {Operads and {PROP}s},
	BOOKTITLE = {Handbook of algebra. {V}ol. 5},
	SERIES = {Handb. Algebr.},
	VOLUME = {5},
	PAGES = {87--140},
	PUBLISHER = {Elsevier/North-Holland, Amsterdam},
	YEAR = {2008},
	MRCLASS = {18D50},
	MRNUMBER = {2523450},
	DOI = {10.1016/S1570-7954(07)05002-4},
	URL = {https://doi.org/10.1016/S1570-7954(07)05002-4},
}

@article {steenrod1947products,
	AUTHOR = {Steenrod, N. E.},
	TITLE = {Products of cocycles and extensions of mappings},
	JOURNAL = {Ann. of Math. (2)},
	FJOURNAL = {Annals of Mathematics. Second Series},
	VOLUME = {48},
	YEAR = {1947},
	PAGES = {290--320},
	ISSN = {0003-486X},
	MRCLASS = {56.0X},
	MRNUMBER = {22071},
	MRREVIEWER = {B. Eckmann},
	DOI = {10.2307/1969172},
	URL = {https://doi.org/10.2307/1969172},
}

@article {adem1952iteration,
	AUTHOR = {Adem, Jos\'{e}},
	TITLE = {The iteration of the {S}teenrod squares in algebraic topology},
	JOURNAL = {Proc. Nat. Acad. Sci. U.S.A.},
	FJOURNAL = {Proceedings of the National Academy of Sciences of the United
	States of America},
	VOLUME = {38},
	YEAR = {1952},
	PAGES = {720--726},
	ISSN = {0027-8424},
	MRCLASS = {56.0X},
	MRNUMBER = {50278},
	MRREVIEWER = {W. S. Massey},
	DOI = {10.1073/pnas.38.8.720},
	URL = {https://doi.org/10.1073/pnas.38.8.720},
}

@article {steenrod1952reduced,
	AUTHOR = {Steenrod, N. E.},
	TITLE = {Reduced powers of cohomology classes},
	JOURNAL = {Ann. of Math. (2)},
	FJOURNAL = {Annals of Mathematics. Second Series},
	VOLUME = {56},
	YEAR = {1952},
	PAGES = {47--67},
	ISSN = {0003-486X},
	MRCLASS = {56.0X},
	MRNUMBER = {48026},
	MRREVIEWER = {H. Cartan},
	DOI = {10.2307/1969766},
	URL = {https://doi.org/10.2307/1969766},
}

@article {steenrod1953cyclic,
	AUTHOR = {Steenrod, N. E.},
	TITLE = {Cyclic reduced powers of cohomology classes},
	JOURNAL = {Proc. Nat. Acad. Sci. U.S.A.},
	FJOURNAL = {Proceedings of the National Academy of Sciences of the United
	States of America},
	VOLUME = {39},
	YEAR = {1953},
	PAGES = {217--223},
	ISSN = {0027-8424},
	MRCLASS = {56.0X},
	MRNUMBER = {54965},
	MRREVIEWER = {H. Cartan},
	DOI = {10.1073/pnas.39.3.217},
	URL = {https://doi.org/10.1073/pnas.39.3.217},
}

@article {milnor1958dual,
	AUTHOR = {Milnor, John},
	TITLE = {The {S}teenrod algebra and its dual},
	JOURNAL = {Ann. of Math. (2)},
	FJOURNAL = {Annals of Mathematics. Second Series},
	VOLUME = {67},
	YEAR = {1958},
	PAGES = {150--171},
	ISSN = {0003-486X},
	MRCLASS = {55.00 (18.00)},
	MRNUMBER = {99653},
	MRREVIEWER = {G. Hirsch},
	DOI = {10.2307/1969932},
	URL = {https://doi.org/10.2307/1969932},
}

@book {steenrod1962cohomology,
	ISBN = {9780691079240},
	abstract = {Written and revised by D. B. A. Epstein.},
	author = {Steenrod, {N. E.} and Epstein, {D. B. A.}},
	publisher = {Princeton University Press},
	title = {Cohomology Operations: Lectures by N. E. Steenrod.},
	year = {1962},
	URL = {http://www.jstor.org/stable/j.ctt1b7x52h},
}

@inproceedings {may1970general,
	AUTHOR = {May, J. Peter},
	TITLE = {A general algebraic approach to {S}teenrod operations},
	BOOKTITLE = {The {S}teenrod {A}lgebra and its {A}pplications ({P}roc.
	{C}onf. to {C}elebrate {N}. {E}. {S}teenrod's {S}ixtieth
	{B}irthday, {B}attelle {M}emorial {I}nst., {C}olumbus, {O}hio,
	1970)},
	SERIES = {Lecture Notes in Mathematics, Vol. 168},
	PAGES = {153--231},
	PUBLISHER = {Springer, Berlin},
	YEAR = {1970},
	MRCLASS = {55.34 (18.00)},
	MRNUMBER = {0281196},
	MRREVIEWER = {W. D. Barcus},
	url = {https://link.springer.com/chapter/10.1007/BFb0058524}
}

@article {gonzalez2005cocyclic,
	AUTHOR = {{Gonzalez-Diaz}, Rocio and Real, Pedro},
	TITLE = {{HPT} and cocyclic operations},
	JOURNAL = {Homology Homotopy Appl.},
	FJOURNAL = {Homology, Homotopy and Applications},
	VOLUME = {7},
	YEAR = {2005},
	NUMBER = {2},
	PAGES = {95--108},
	ISSN = {1532-0081},
	MRCLASS = {55S10 (05E99)},
	MRNUMBER = {2156309},
	MRREVIEWER = {Adriana Ciampella},
	URL = {http://projecteuclid.org/euclid.hha/1139839376},
}

@article {tourtchine2006cohen,
	AUTHOR = {Tourtchine, Victor},
	TITLE = {Dyer-{L}ashof-{C}ohen operations in {H}ochschild cohomology},
	JOURNAL = {Algebr. Geom. Topol.},
	FJOURNAL = {Algebraic \& Geometric Topology},
	VOLUME = {6},
	YEAR = {2006},
	PAGES = {875--894},
	ISSN = {1472-2747},
	MRCLASS = {55S12 (16E40 18D50 55P48)},
	MRNUMBER = {2240919},
	MRREVIEWER = {Vigleik Angeltveit},
	DOI = {10.2140/agt.2006.6.875},
	URL = {https://doi-org.ezproxy.lib.purdue.edu/10.2140/agt.2006.6.875},
}

@incollection {pilarczyk2016cubical,
	AUTHOR = {Kr\v{c}\'{a}l, Marek and Pilarczyk, Pawe{\l} },
	TITLE = {Computation of cubical {S}teenrod squares},
	BOOKTITLE = {Computational topology in image context},
	SERIES = {Lecture Notes in Comput. Sci.},
	VOLUME = {9667},
	PAGES = {140--151},
	PUBLISHER = {Springer},
	YEAR = {2016},
	MRCLASS = {55N45 (55U15 68W30)},
	MRNUMBER = {3533883},
	DOI = {10.1007/978-3-319-39441-1\_13},
	URL = {https://doi.org/10.1007/978-3-319-39441-1_13},
}

@article {araki56squaring,
	AUTHOR = {Kudo, Tatsuji and Araki, Sh\^{o}r\^{o}},
	TITLE = {Topology of {$H_n$}-spaces and {$H$}-squaring operations},
	JOURNAL = {Mem. Fac. Sci. Ky\={u}sy\={u} Univ. A},
	FJOURNAL = {Memoirs of the Faculty of Science. Kyushu University. Series
	A. Mathematics},
	VOLUME = {10},
	YEAR = {1956},
	PAGES = {85--120},
	ISSN = {0373-6385},
	MRCLASS = {55.0X},
	MRNUMBER = {87948},
	MRREVIEWER = {R. Bott},
	doi = {10.2206/kyushumfs.10.85},
	url = {https://doi.org/10.2206/kyushumfs.10.85}
}

@article {dyer62lashof,
	AUTHOR = {Dyer, Eldon and Lashof, R. K.},
	TITLE = {Homology of iterated loop spaces},
	JOURNAL = {Amer. J. Math.},
	FJOURNAL = {American Journal of Mathematics},
	VOLUME = {84},
	YEAR = {1962},
	PAGES = {35--88},
	ISSN = {0002-9327},
	MRCLASS = {55.30 (55.40)},
	MRNUMBER = {141112},
	MRREVIEWER = {J. F. Adams},
	DOI = {10.2307/2372804},
	URL = {https://doi.org/10.2307/2372804},
}

@article {barratt1972priddyquillen,
	AUTHOR = {Barratt, Michael and Priddy, Stewart},
	TITLE = {On the homology of non-connected monoids and their associated
	groups},
	JOURNAL = {Comment. Math. Helv.},
	FJOURNAL = {Commentarii Mathematici Helvetici},
	VOLUME = {47},
	YEAR = {1972},
	PAGES = {1--14},
	ISSN = {0010-2571},
	MRCLASS = {18H10 (57F05)},
	MRNUMBER = {314940},
	MRREVIEWER = {J. C. Becker},
	DOI = {10.1007/BF02566785},
	URL = {https://doi.org/10.1007/BF02566785},
}

@book {may76homology,
	AUTHOR = {Cohen, Frederick R. and Lada, Thomas J. and May, J. Peter},
	TITLE = {The homology of iterated loop spaces},
	SERIES = {Lecture Notes in Mathematics, Vol. 533},
	PUBLISHER = {Springer-Verlag, Berlin-New York},
	YEAR = {1976},
	PAGES = {vii+490},
	MRCLASS = {55G25 (55D35)},
	MRNUMBER = {0436146},
	MRREVIEWER = {Peter J. Eccles},
	url = {https://doi.org/10.1007/BFb0080464}
}

@incollection {lawson2020dyerlashof,
	AUTHOR = {Lawson, Tyler},
	TITLE = {{$E_n$}-spectra and {D}yer-{L}ashof operations},
	BOOKTITLE = {Handbook of homotopy theory},
	SERIES = {CRC Press/Chapman Hall Handb. Math. Ser.},
	PAGES = {793--849},
	PUBLISHER = {CRC Press, Boca Raton, FL},
	YEAR = {2020},
	MRCLASS = {55P43 (55S12 55S20)},
	MRNUMBER = {4197999},
	url = {https://arxiv.org/abs/2002.03889}
}

@article {adams1956cobar,
	AUTHOR = {Adams, J. F.},
	TITLE = {On the cobar construction},
	JOURNAL = {Proc. Nat. Acad. Sci. U.S.A.},
	FJOURNAL = {Proceedings of the National Academy of Sciences of the United
	States of America},
	VOLUME = {42},
	YEAR = {1956},
	PAGES = {409--412},
	ISSN = {0027-8424},
	MRCLASS = {55.0X},
	MRNUMBER = {79266},
	MRREVIEWER = {W. S. Massey},
	DOI = {10.1073/pnas.42.7.409},
	URL = {https://doi.org/10.1073/pnas.42.7.409},
}

@article {baues1998hopf,
	AUTHOR = {Baues, Hans-Joachim},
	TITLE = {The cobar construction as a {H}opf algebra},
	JOURNAL = {Invent. Math.},
	FJOURNAL = {Inventiones Mathematicae},
	VOLUME = {132},
	YEAR = {1998},
	NUMBER = {3},
	PAGES = {467--489},
	ISSN = {0020-9910},
	MRCLASS = {55P35 (18G50 55P62)},
	MRNUMBER = {1625728},
	MRREVIEWER = {Paul G. Goerss},
	DOI = {10.1007/s002220050231},
	URL = {https://doi.org/10.1007/s002220050231},
}

@article {kadeishvili2003cupi,
	AUTHOR = {Kadeishvili, T.},
	TITLE = {Cochain operations defining {S}teenrod {$\smile_i$}-products
	in the bar construction},
	JOURNAL = {Georgian Math. J.},
	FJOURNAL = {Georgian Mathematical Journal},
	VOLUME = {10},
	YEAR = {2003},
	NUMBER = {1},
	PAGES = {115--125},
	ISSN = {1072-947X},
	MRCLASS = {55P48 (55S10 57T30)},
	MRNUMBER = {1990691},
	MRREVIEWER = {Dai Tamaki},
	url = {https://www.emis.de/journals/GMJ/vol10/v10n1-9.pdf}
}

@article {edelsbrunner2002topological,
	title={Topological Persistence and Simplification},
	author={Edelsbrunner, Herbert and Letscher, David and Zomorodian, Afra},
	journal={Discrete Comput Geom},
	volume={28},
	pages={511--533},
	year={2002},
	publisher={Citeseer},
	url = {https://doi.org/10.1007/s00454-002-2885-2}
}

@article {carlsson2005barcode,
	title={Computing persistent homology},
	author={Zomorodian, Afra and Carlsson, Gunnar},
	journal={Discrete \& Computational Geometry},
	volume={33},
	number={2},
	pages={249--274},
	year={2005},
	publisher={Springer},
	url = {https://doi.org/10.1007/s00454-004-1146-y}
}

@article {chan2013viral,
	author = {Chan, Joseph Minhow and Carlsson, Gunnar and Rabadan, Raul},
	title = {Topology of viral evolution},
	volume = {110},
	number = {46},
	pages = {18566--18571},
	year = {2013},
	doi = {10.1073/pnas.1313480110},
	publisher = {National Academy of Sciences},
	issn = {0027-8424},
	URL = {https://www.pnas.org/content/110/46/18566},
	eprint = {https://www.pnas.org/content/110/46/18566.full.pdf},
	journal = {Proceedings of the National Academy of Sciences}
}

@article {hess2017cliques,
	title={Cliques of neurons bound into cavities provide a missing link between structure and function},
	author={Reimann, Michael W and Nolte, Max and Scolamiero, Martina and Turner, Katharine and Perin, Rodrigo and Chindemi, Giuseppe and D{\l}otko, Pawe{\l} and Levi, Ran and Hess, Kathryn and Markram, Henry},
	journal={Frontiers in computational neuroscience},
	volume={11},
	pages={48},
	year={2017},
	publisher={Frontiers},
	doi = {10.3389/fncom.2017.00048},
	URL = {https://doi.org/10.3389/fncom.2017.00048}
}

@article {steiner2004omega,
	AUTHOR = {Steiner, Richard},
	TITLE = {Omega-categories and chain complexes},
	JOURNAL = {Homology Homotopy Appl.},
	FJOURNAL = {Homology, Homotopy and Applications},
	VOLUME = {6},
	YEAR = {2004},
	NUMBER = {1},
	PAGES = {175--200},
	ISSN = {1532-0081},
	MRCLASS = {18D05},
	MRNUMBER = {2061574},
	MRREVIEWER = {Graham J. Ellis},
	URL = {http://projecteuclid.org/euclid.hha/1139839551},
}

@article {ayala2014configuration,
	AUTHOR = {Ayala, David and Hepworth, Richard},
	TITLE = {Configuration spaces and {$\Theta_n$}},
	JOURNAL = {Proc. Amer. Math. Soc.},
	FJOURNAL = {Proceedings of the American Mathematical Society},
	VOLUME = {142},
	YEAR = {2014},
	NUMBER = {7},
	PAGES = {2243--2254},
	ISSN = {0002-9939},
	MRCLASS = {18D05 (55R80)},
	MRNUMBER = {3195750},
	MRREVIEWER = {Jeffrey Giansiracusa},
	DOI = {10.1090/S0002-9939-2014-11946-0},
	URL = {https://doi.org/10.1090/S0002-9939-2014-11946-0},
}

@article {gaiotto2016spin,
	author = {Gaiotto, Davide and Kapustin, Anton},
	title = {Spin TQFTs and fermionic phases of matter},
	journal = {International Journal of Modern Physics A},
	volume = {31},
	number = {28n29},
	pages = {1645044},
	year = {2016},
	doi = {10.1142/S0217751X16450445},
	URL = {https://doi.org/10.1142/S0217751X16450445},
	eprint = {https://doi.org/10.1142/S0217751X16450445},
}

@article {bhardwaj2017fermionic,
	author={Bhardwaj, Lakshya
	and Gaiotto, Davide
	and Kapustin, Anton},
	title={State sum constructions of spin-TFTs and string net constructions of fermionic phases of matter},
	journal={Journal of High Energy Physics},
	year={2017},
	month={04},
	day={18},
	volume={2017},
	number={4},
	pages={96},
	issn={1029-8479},
	doi={10.1007/JHEP04(2017)096},
	url={https://doi.org/10.1007/JHEP04(2017)096}
}

@article {kapustin2017fermionic,
	AUTHOR = {Kapustin, Anton and Thorngren, Ryan},
	TITLE = {Fermionic {SPT} phases in higher dimensions and bosonization},
	JOURNAL = {J. High Energy Phys.},
	FJOURNAL = {Journal of High Energy Physics},
	YEAR = {2017},
	NUMBER = {10},
	PAGES = {080, front matter+48},
	ISSN = {1126-6708},
	MRCLASS = {81T30},
	MRNUMBER = {3731133},
	DOI = {10.1007/jhep10(2017)080},
	URL = {https://doi.org/10.1007/jhep10(2017)080},
}

@article {brumfiel2016pontrjagin,
	author = {{Brumfiel}, Greg and {Morgan}, John},
	title = "{The {P}ontrjagin Dual of 3-Dimensional Spin Bordism}",
	journal = {arXiv e-prints},
	year = 2016,
	archivePrefix = {arXiv},
	eprint = {1612.02860},
}

@article {brumfiel2018pontrjagin,
	author = {{Brumfiel}, Greg and {Morgan}, John},
	title = "{The {P}ontrjagin Dual of 4-Dimensional Spin Bordism}",
	journal = {arXiv e-prints},
	year = 2018,
	archivePrefix = {arXiv},
	eprint = {1803.08147},
}
\end{document}